\documentclass[11pt,a4paper]{article}
\usepackage[margin=1in]{geometry}
\usepackage[T1]{fontenc}
\usepackage[round,authoryear]{natbib}
\usepackage{amsmath,amssymb,amsthm,mathtools}
\usepackage{booktabs}
\usepackage{graphicx}
\ifdefined\pdfminorversion\pdfminorversion=7\fi
\usepackage{float}
\usepackage{microtype}
\usepackage{needspace}
\usepackage{multirow}
\usepackage{xcolor}

\usepackage[hidelinks]{hyperref}
\hypersetup{pdftitle={Sieve-free ReLU Bayesian priors with near-minimax posterior contraction},
  pdfauthor={Hien Duy Nguyen, Jacob Westerhout, Julyan Arbel}}

\newtheorem{theorem}{Theorem}
\newtheorem{proposition}{Proposition}
\newtheorem{lemma}{Lemma}

\theoremstyle{remark}
\newtheorem{remark}{Remark}
\newcommand{\Xcal}{\mathcal X}
\newcommand{\Hcal}{\mathcal H}
\newcommand{\Dcal}{\mathcal D}
\newcommand{\Lcal}{\mathcal L}
\newcommand{\Ccal}{\mathcal C}
\newcommand{\PP}{\mathbb P}
\newcommand{\E}{\mathbb E}
\newcommand{\norm}[1]{\lVert#1\rVert}
\newcommand{\abs}[1]{\lvert#1\rvert}
\DeclareMathOperator{\Unif}{Unif}

\title{Sieve-free ReLU Bayesian priors with near-minimax posterior contraction}
\author{Hien Duy Nguyen$^{1,2}$ \quad Jacob Westerhout$^{3}$ \quad Julyan Arbel$^{4}$\\[0.8em]
\parbox{0.96\textwidth}{\centering\small
$^{1}$Department of Mathematical and Physical Sciences,\\
La Trobe University, Melbourne, Australia\\
$^{2}$Institute of Mathematics for Industry, Kyushu University, Fukuoka, Japan\\
$^{3}$School of Mathematics and Physics, The University of Queensland, Brisbane, Australia\\
$^{4}$Univ. Grenoble Alpes, Inria, CNRS, Grenoble INP, LJK, Grenoble, France}}
\date{}

\begin{document}
\maketitle
\raggedbottom

\begin{abstract}
We establish fractional-posterior contraction for shallow ReLU networks
in Gaussian random-design regression with a $\beta$-H\"older target,
using priors independent of the sample size and smoothness parameters.
For any fixed design law $P_X$ on $[0,1]^d$, we obtain contraction in
R\'enyi divergence and, after clipping posterior draws at a bound on
the target range, in squared $\mathcal{L}^2(P_X)$ loss at rate
$(\log n/n)^{2\beta/(2\beta+d)}$.
This matches the classical H\"older minimax rate up to a logarithmic
factor.
These priors have random finite width and
use width-dependent uniform or Gaussian weight distributions, or
width-independent Student $t$ distributions, including Cauchy distributions.
\end{abstract}

\section{Introduction}

A central question in the Bayesian theory of neural networks is whether
posteriors under network priors attain optimal frequentist rates of
estimation. For deep ReLU networks, near-minimax contraction has been
established under spike-and-slab priors whose nonzero network parameters
are uniform on a bounded interval \citep{polsonrockova2018}, under
Gaussian and other network-parameter laws with positive densities
\citep{kongkim2025}, and under heavy-tailed network-parameter laws such as Student
$t$ and Cauchy distributions \citep{castilloegels2025}. In each case, however,
the joint prior changes with the sample size $n$: some part of the
architecture, its distribution, or the network-parameter scale is chosen as a
function of $n$. This allows the prior
specification to be calibrated to the statistical resolution available
at sample size $n$, but means that a different prior is used as the
amount of data changes. We show that this dependence on $n$ is
unnecessary. Here, sieve-free refers to a prior fixed across sample
sizes; this use of the term should not be confused with mixtures over
finite-dimensional models, which are also called sieve priors
\citep{arbel2013}.

Our main aim is to isolate the mechanism that allows a single fixed
prior to attain the same rates. A fixed prior cannot be tuned to one
statistical accuracy; it must instead allocate enough probability
simultaneously across all relevant approximation scales. For
one-hidden-layer networks, width provides a natural index for these
scales. Given a target and an accuracy $\varepsilon>0$, we use the
approximation bounds of \citet{yangzhou2024,yangzhou2025} to select an approximating
network of suitable width $k$, and ask how much probability the same
prior assigns to a neighbourhood of its parameters. The hidden-layer coefficients
and biases of these approximators remain bounded, while the required
output coefficients grow at most polynomially with the approximation
complexity. These bounds can be dominated by rules depending only on
the sampled width and the input dimension. This suggests a hierarchical
construction: specify once and for all a law on all finite widths and,
conditional on width $k$, a law on the corresponding network parameters. Neither
$n$ nor the unknown smoothness or radius of the target need enter the
prior.

The argument separates the approximation problem from a local
prior-mass requirement: the prior must assign sufficient probability
to a suitable width and to accurate parameters at that width.
We verify this requirement for three
qualitatively different conditional network-parameter families: bounded uniform
laws with width-dependent output support, Gaussian laws with a
width-dependent output variance, and fixed Student $t$ laws, including
the Cauchy distribution. The same calculation applies to fixed network-parameter
densities satisfying a polynomial lower bound. These examples identify
local mass near approximating networks as the property that permits
adaptation.

We work with one-hidden-layer networks in Gaussian random-design
regression, with fixed input dimension $d\geq1$ and design law $P_X$
on $[0,1]^d$. Width indexes approximation complexity, and uniform
approximation gives likelihood neighbourhoods for arbitrary design laws.
Following \citet{bhattacharya2019} and \citet{castilloegels2025},
we study a fractional posterior, which raises the likelihood to a fixed
power $0<\eta<1$. Its contraction argument requires only sufficient
local prior mass, avoiding the additional global control of large models
needed for ordinary-posterior contraction. The order-$\eta$ R\'enyi
divergence arises directly from the fractional likelihood ratio.
For H\"older smoothness $0<\beta\leq1$ and radius $M>0$, the
fractional posterior under the same fixed prior contracts in R\'enyi
divergence at order $(\log n/n)^{2\beta/(2\beta+d)}$, uniformly over
the corresponding H\"older ball.

R\'enyi contraction isolates this mechanism, but squared
prediction error is a stronger and more familiar
frequentist loss. The passage between the two creates a separate
difficulty: posterior draws under our fixed priors need not be
uniformly bounded, and large discrepancies on sets of small
$P_X$-probability can have a much greater effect on squared error than
on R\'enyi divergence. Rather than impose additional restrictions on
the prior or a clipping level depending on the unknown target radius,
we clip posterior draws at a level estimated from the data. Under a
design density bounded away from zero, this level contains the range
of the target with exponentially high probability. Once both functions
are bounded, R\'enyi divergence controls squared prediction error.
The resulting posterior mean has risk
$(\log n/n)^{2\beta/(2\beta+d)}$ under the lower density bound.
If the density is also bounded above, the classical H\"older minimax
lower bound shows that this rate is optimal up to a logarithmic factor.
An alternative clipping construction retains the same upper
risk rate under arbitrary design laws, at the cost of an additional
polynomial term in the posterior-tail bound. Clipping also ensures
that the posterior mean exists. For Cauchy and Student network-parameter priors with
at most one degree of freedom, the unclipped mean can fail to exist
when a unit is inactive at every observed covariate; see
Appendix~\ref{app:unclipped-mean}. Whether squared prediction-loss
contraction holds without clipping, or the unclipped mean attains
the same risk rate under conditions ensuring its existence, remains open.

To the best of our knowledge, this is the first adaptive near-minimax
fractional-posterior contraction result for dense shallow ReLU networks
under a prior independent of the sample size, with bounded parameter
distributions conditional on width.

\subsection{Approximation and frequentist estimation}\label{sec:related}

Classical shallow-network rates such as those of \citet{barron1993}
concern Fourier-moment classes rather than H\"older balls.
For deep ReLU networks, quantitative approximation bounds
\citep{yarotsky2017} underpin near-minimax estimation for
compositional targets \citep{schmidthieber2020}, fully connected
architectures \citep{kohlerlanger2021}, and Besov and mixed-smoothness
classes \citep{suzuki2019}.

We use the shallow-network approximation rate $k^{-\beta/d}$ for width $k$ and
output-coefficient bounds of \citet[Cor.~2.4(3)]{yangzhou2025}, valid for
$0<\beta<(d+3)/2$; see also \citet{yangzhou2024}. Rather than select
an architecture for a frequentist procedure, these bounds identify
accurate parameter neighbourhoods whose probability we evaluate under
one fixed prior. We compare the resulting risk with the classical
H\"older minimax lower bound of \citet{stone1982}.

\subsection{Bayesian contraction and architecture selection}

Random widths already appear in the shallow-network consistency
results of \citet{lee2000}; hierarchical dimension priors are also
standard in Bayesian nonparametrics \citep{arbel2013}.
For deep ReLU networks, \citet{polsonrockova2018} obtain near-minimax
contraction with uniform-slab spike-and-slab priors and random width,
but retain other $n$-dependent architectural choices. Related
approaches use sparse connections
\citep{leelee2022,leelinparkjeong2025}, hidden-unit selection
\citep{kong2023}, or variational inference
\citep{cherief2020,bai2020,ohnlin2024}.

For dense ordinary posteriors, \citet{kongkim2025} cover Gaussian and
other network-parameter laws through a local density lower bound on every
bounded parameter box, with network dimensions varying with $n$.
Our condition instead controls neighbourhoods of
approximation-compatible parameters, allowing width-dependent
bounded supports.

\subsection{Heavy tails and fractional posteriors}

Heavy-tailed series priors adapt to unknown smoothness
\citep{agapioucastillo2024}. For deep ReLU networks,
\citet{castilloegels2025} obtain near-minimax fractional-posterior
contraction with heavy-tailed network parameters and $n$-dependent network
specifications and coefficient scales. The adaptive shallow-network
construction of \citet{agapiou2026}, with fixed shifts and random
output coefficients, likewise uses $n$-dependent quantities.

Polynomial density lower bounds make coefficient magnitude contribute
only logarithmically to the negative log-probability of local parameter
intervals. Our uniform and Gaussian constructions satisfy the same
local-mass criterion. Fractional posteriors isolate this
criterion from the global control needed for ordinary posteriors;
clipping then recovers squared prediction-risk bounds.

\Needspace{9\baselineskip}

\section{Statistical model and fractional posterior}
\label{sec:setting}\label{sec:notation}

All functions are assumed measurable. Random objects are defined on
a probability space $(\Omega,\mathcal A,\PP)$, with Borel
$\sigma$-algebras on their state spaces. Fix
$d\geq1$ and write $\Xcal=[0,1]^d$. For an unknown regression
function $f_0:\Xcal\to\mathbb R$, we observe
$\Dcal_n=((X_i,Y_i))_{i=1}^n$ from
\begin{equation}\label{eq:model}
Y_i=f_0(X_i)+\xi_i,\qquad i=1,\ldots,n,
\end{equation}
where the covariates $X,X_i:\Omega\to\Xcal$, $i\geq1$, are
independent with law $P_X$ and the errors $\xi_i:\Omega\to\mathbb R$
are independent standard normal variables, independent of the covariates.
The design law is fixed but otherwise unrestricted unless a density
condition is stated. Write $\E_{f_0}$ and $\PP_{f_0}$ for expectation
and probability under this data law.

Let $\Ccal(\Xcal)$ be the continuous real-valued functions on
$\Xcal$, equipped with the supremum norm and its Borel
$\sigma$-algebra. For a real vector $v=(v_1,\ldots,v_\ell)$
with $\ell\geq1$ coordinates, write
$\norm v_1=\sum_{j=1}^\ell|v_j|$,
$\norm v_2=(\sum_{j=1}^\ell v_j^2)^{1/2}$ and
$\norm v_\infty=\max_{1\leq j\leq\ell}|v_j|$. For a real-valued function $f$
with domain $S$, write $\norm f_\infty=\sup_{x\in S}|f(x)|$.
For $0<\beta\leq1$ and $M>0$, define
\begin{equation}\label{eq:holder}
\Hcal_d^\beta(M)=\bigl\{f\in\Ccal(\Xcal):\norm f_\infty\leq M,
|f(x)-f(y)|\leq M\norm{x-y}_\infty^\beta\text{ for all }x,y\in\Xcal\bigr\}.
\end{equation}
The prediction loss is
$\norm{f-f_0}_{\Lcal^2(P_X)}^2
=\int_\Xcal(f-f_0)^2\,\mathrm dP_X$. Given a prior $\Pi$ on $\Ccal(\Xcal)$ and a fixed
$0<\eta<1$, the fractional posterior is
\begin{equation}\label{eq:posterior}
\Pi_n^\eta(\mathrm d f\mid\Dcal_n)\propto
\exp\!\left\{-\frac\eta2\sum_{i=1}^n
(Y_i-f(X_i))^2\right\}\Pi(\mathrm d f).
\end{equation}
The positive bounded likelihood gives a finite, nonzero normalizing
constant for every data set.
Write $\E_X$ for expectation under $X\sim P_X$.
The one-observation order-$\eta$ R\'enyi divergence in this model is
\begin{equation}\label{eq:renyi}
D_\eta(f,f_0)=-\frac1{1-\eta}\log\E_X
 e^{-\eta(1-\eta)(f(X)-f_0(X))^2/2}.
\end{equation}
We write $a\lesssim b$ if $a\leq Cb$ for a fixed positive constant
$C$, $a\gtrsim b$ if $b\lesssim a$, and $a\asymp b$ if both
inequalities hold. For positive sequences, $a_n\gg b_n$ means
$a_n/b_n\to\infty$. The symbols $\vee$ and $\wedge$ denote maximum
and minimum. Constants in uniform
bounds may depend on $d,\beta,M,\eta$ and the chosen prior, but not
on $n$, the approximation accuracy, or $f_0$ within the stated
H\"older ball. They are independent of $P_X$ unless dependence on
design-density bounds is stated.

\section{Where the prior must place mass}
\label{sec:prior-mass}

Lemma~\ref{lem:frac} specifies how much local prior mass contraction
needs; Proposition~\ref{prop:approx} identifies where to place it.
Both apply before choosing a particular network prior.

\paragraph{How much mass is needed.}
The following form of the fractional-posterior principle of
\citet{bhattacharya2019} converts uniform-approximation mass into
R\'enyi contraction.

\begin{lemma}[Prior mass implies fractional contraction]\label{lem:frac}
Let $\Pi$ be any prior on $\Ccal(\Xcal)$, and let
$\varepsilon_n\downarrow0$ with $n\varepsilon_n^2\to\infty$.
Suppose that, for some fixed $c_0>0$, uniformly over a class of targets,
\[
\Pi\{f:\norm{f-f_0}_\infty\leq\varepsilon_n\}
\geq e^{-c_0n\varepsilon_n^2}.
\]
For $0<\eta<1$ and
$J>(c_0+\eta)/(1-\eta)$,
\begin{equation}\label{eq:frac-general}
\sup_{f_0}\E_{f_0}\Pi_n^\eta
\bigl(D_\eta(f,f_0)>J\varepsilon_n^2\mid\Dcal_n\bigr)
\leq e^{-n\varepsilon_n^2/8}
+e^{-\{(1-\eta)J-c_0-\eta\}n\varepsilon_n^2}.
\end{equation}
\end{lemma}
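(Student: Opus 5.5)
We follow the standard fractional-posterior argument of \citet{bhattacharya2019}: control the denominator of the posterior on a high-probability event, and bound the expected numerator by exact integration of the fractional likelihood ratio. Write $\ell_n(f)=\sum_{i=1}^n\{(Y_i-f(X_i))^2-(Y_i-f_0(X_i))^2\}/2$, so that the log-likelihood ratio of $f$ against $f_0$ is $-\ell_n(f)$. Then
\[
\Pi_n^\eta(A\mid\Dcal_n)=\frac{\int_A e^{-\eta\ell_n(f)}\,\Pi(\mathrm df)}{\int e^{-\eta\ell_n(f)}\,\Pi(\mathrm df)}
\]
for $A=\{f:D_\eta(f,f_0)>J\varepsilon_n^2\}$. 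Let $B_n=\{f:\norm{f-f_0}_\infty\leq\varepsilon_n\}$. The event $E_n$ on which we control the denominator is
\[
\int_{B_n}e^{-\eta\ell_n}\,\mathrm d\Pi\geq\Pi(B_n)\,e^{-\eta n\varepsilon_n^2}.
\]

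\textbf{Numerator.} Fubini and the Gaussian model give, for each fixed $f$,
\[
\E_{f_0}e^{-\eta\ell_n(f)}=\bigl(\E_X e^{-\eta(1-\eta)(f(X)-f_0(X))^2/2}\bigr)^n=e^{-n(1-\eta)D_\eta(f,f_0)}.
\]
This follows by conditioning on $X_i$: $\ell_n(f)=\sum_i\{\Delta_i^2/2-\Delta_i\xi_i\}$ with $\Delta_i=f(X_i)-f_0(X_i)$, and $\E e^{\eta\Delta\xi-\eta\Delta^2/2}=e^{-\eta(1-\eta)\Delta^2/2}$. Hence $\E_{f_0}\int_A e^{-\eta\ell_n}\,\mathrm d\Pi\leq e^{-(1-\eta)Jn\varepsilon_n^2}$. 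Measurability of $(f,\Dcal_n)\mapsto\ell_n(f)$ on $\Ccal(\Xcal)$ is routine, since evaluation maps are continuous.

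\textbf{Denominator.} Restrict the integral to $B_n$ and apply Jensen with the normalised prior $\Pi(\cdot\mid B_n)$:
\[
\log\int_{B_n}e^{-\eta\ell_n}\,\mathrm d\Pi\geq\log\Pi(B_n)-\eta\int\ell_n\,\mathrm d\Pi(\cdot\mid B_n).
\]
Here $\int\ell_n\,\mathrm d\Pi(\cdot\mid B_n)=\sum_i\{\bar a_i/2-\bar b_i\xi_i\}$, where $\bar a_i=\int\Delta_i^2\,\mathrm d\Pi(\cdot\mid B_n)\leq\varepsilon_n^2$ and $\bar b_i=\int\Delta_i\,\mathrm d\Pi(\cdot\mid B_n)$ satisfies $|\bar b_i|\leq\varepsilon_n$. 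Conditionally on the covariates, $\sum_i\bar b_i\xi_i$ is centred Gaussian with variance at most $n\varepsilon_n^2$. Therefore
\[
\PP\Bigl(\sum_i\{\bar a_i/2-\bar b_i\xi_i\}>n\varepsilon_n^2\Bigr)\leq\PP\bigl(N(0,n\varepsilon_n^2)>n\varepsilon_n^2/2\bigr)\leq e^{-n\varepsilon_n^2/8}.
\]
This is exactly the first term in \eqref{eq:frac-general}, and it shows $\PP_{f_0}(E_n^c)\leq e^{-n\varepsilon_n^2/8}$.

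\textbf{Combination.} On $E_n$, the denominator is at least $e^{-(c_0+\eta)n\varepsilon_n^2}$. We bound the posterior probability by $1$ on $E_n^c$, and on $E_n$ by the numerator times $e^{(c_0+\eta)n\varepsilon_n^2}$. Taking expectations gives
\[
e^{-n\varepsilon_n^2/8}+e^{-\{(1-\eta)J-c_0-\eta\}n\varepsilon_n^2}.
\]
All constants are free of $f_0$, so the bound holds uniformly over the class.

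The only delicate step is the denominator bound. One needs the Jensen step to produce a single Gaussian linear form, rather than a supremum over $f\in B_n$, so that the sharp constant $1/8$ is obtained without any chaining.
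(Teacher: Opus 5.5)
Your proposal is correct and follows the paper's proof essentially step for step. The denominator step, Jensen's inequality over $\Pi(\cdot\mid B_n)$, reduces to a conditionally Gaussian linear form with mean at least $-n\varepsilon_n^2/2$ and variance at most $n\varepsilon_n^2$, which yields the $e^{-n\varepsilon_n^2/8}$ term. The numerator step, the identity $\E_{f_0}e^{-\eta\ell_n(f)}=e^{-(1-\eta)nD_\eta(f,f_0)}$ with Tonelli, and the final split on the denominator event are exactly as in the paper.
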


At $\varepsilon_n\asymp(\log n/n)^{\beta/(2\beta+d)}$,
$n\varepsilon_n^2\asymp
\varepsilon_n^{-d/\beta}\log(1/\varepsilon_n)$.
It therefore suffices that
\[
-\log\Pi\{f:\norm{f-f_0}_\infty\leq\varepsilon\}
\lesssim\varepsilon^{-d/\beta}\log(1/\varepsilon)
\]
for all small $\varepsilon$, uniformly over each $\Hcal_d^\beta(M)$.
The lemma is proved in Appendix~\ref{app:fractional-proof}.

\paragraph{Where the mass is needed.}
For a width $k\geq1$, let
$\theta=((a_m,u_m,b_m))_{m=1}^k\in\mathbb R^{(d+2)k}$, where
$a_m,b_m\in\mathbb R$ are output coefficients and biases, and
$u_m\in\mathbb R^d$ are hidden-layer coefficient vectors.
With the ReLU activation $\rho(t)=\max\{t,0\}$, define
\[
f_\theta(x)=\sum_{m=1}^ka_m\rho(u_m^\top x+b_m),
\qquad x\in\Xcal.
\]
The following is \citet[Cor.~2.4(3)]{yangzhou2025}, with the accuracy
expressed through the width and with zero units added when the
approximating network has fewer than $k$ units. Put
$\omega=(d+3)/(2d)$.

\begin{proposition}[Approximation by shallow networks]
\label{prop:approx}
Fix $0<\beta\leq1$ and $M>0$. There is $C>0$ such that, for every
$f_0\in\Hcal_d^\beta(M)$ and every $k\geq1$, some
$\theta_k^*=((a_m^*,u_m^*,b_m^*))_{m=1}^k$ satisfies
\begin{align}
\norm{f_{\theta_k^*}-f_0}_\infty&\leq Ck^{-\beta/d},
\label{eq:approx-1}\\
\max_{1\leq m\leq k}\{\norm{u_m^*}_\infty,|b_m^*|\}&\leq1,
\label{eq:approx-2}\\
\sum_{m=1}^k|a_m^*|&\leq Ck^{\omega-\beta/d}.
\label{eq:approx-3}
\end{align}
\end{proposition}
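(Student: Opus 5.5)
The plan is to read Proposition~\ref{prop:approx} directly off \citet[Cor.~2.4(3)]{yangzhou2025} and then convert its accuracy parameter into a width. That corollary applies to H\"older targets with $0<\beta<(d+3)/2$, which covers our range $0<\beta\leq1$. For a target with controlled H\"older norm, it supplies, for each $N\geq1$, a one-hidden-layer ReLU network with at most $N$ units whose sup-norm error is at most $C_1\norm{f_0}N^{-\beta/d}$. Its inner weights and biases are normalized, say $(u_m,b_m)$ on the unit sphere or in a unit cube, and the total variation $\sum|a_m|$ of its output weights is at most $C_1\norm{f_0}N^{\omega-\beta/d}$ with $\omega=(d+3)/(2d)$. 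I would first restate this corollary in exactly this form. For $f_0\in\Hcal_d^\beta(M)$, the relevant H\"older norm is bounded by a multiple of $M$, because both the supremum and the $\norm{\cdot}_\infty$-H\"older seminorm in \eqref{eq:holder} are at most $M$. Switching between the $\norm{\cdot}_\infty$ and $\norm{\cdot}_2$ metrics on $\Xcal$ costs only a factor $d^{\beta/2}$, which is absorbed into $C$.

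Next I would match the normalization in \eqref{eq:approx-2}. If the cited result puts $(u_m,b_m)$ on the sphere $\norm{(u_m,b_m)}_2=1$, then each coordinate already has absolute value at most $1$, so \eqref{eq:approx-2} holds. If instead the hidden parameters are bounded by some constant $R$ depending only on $d$, I would use the positive homogeneity of the ReLU, $a\rho(u^\top x+b)=(aR)\rho((u/R)^\top x+b/R)$. This rescales the hidden parameters into $[-1,1]$ and multiplies $\sum|a_m|$ by $R$, again absorbed into $C$. Any affine or constant part in the cited approximant must also be written with ReLU units. A constant $c$ equals $c\rho(0^\top x+1)$ on $\Xcal$, and a linear term can be handled similarly, since $u^\top x+b\geq0$ on $[0,1]^d$ for suitable $b$. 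This uses $O(1)$ extra units with output weight of order $M$, which is dominated by $k^{\omega-\beta/d}$ because $\omega>\beta/d$, as $(d+3)/2>\beta$.

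Finally, I would pass from \emph{at most} $N$ units to \emph{exactly} $k$ units. Given $k$, take $N=k-k_0$ when $k>2k_0$, where $k_0$ is the number of auxiliary units from the previous step, and pad with zero units $(a_m^*,u_m^*,b_m^*)=(0,0,0)$. These satisfy \eqref{eq:approx-2} and do not change \eqref{eq:approx-3}. Since $N\asymp k$, the bounds $N^{-\beta/d}\lesssim k^{-\beta/d}$ and $N^{\omega-\beta/d}\leq k^{\omega-\beta/d}$ give \eqref{eq:approx-1} and \eqref{eq:approx-3}. For the finitely many small $k\leq 2k_0$, I would use the zero network: its error is $\norm{f_0}_\infty\leq M\leq Mk^{\beta/d}k^{-\beta/d}\leq M(2k_0)^{\beta/d}k^{-\beta/d}$. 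Enlarging $C$ once, uniformly over the ball, covers every case.

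The main obstacle is bookkeeping rather than new analysis. I need to confirm the exact normalization and norm used in the cited corollary, in particular whether its constant depends on $f_0$ only through a H\"older-type norm bounded by $M$ and whether $(u_m,b_m)$ is normalized in $\ell_2$ or $\ell_\infty$. I also need to check that the output-coefficient exponent is exactly $\omega-\beta/d$ in the width parametrization. If the cited bound is stated in terms of accuracy $\epsilon$, I would substitute $\epsilon\asymp k^{-\beta/d}$, that is $k\asymp\epsilon^{-d/\beta}$, and check that the stated coefficient growth becomes $k^{\omega-\beta/d}$.
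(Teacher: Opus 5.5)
Your outline follows the paper: invoke \citet[Cor.~2.4(3)]{yangzhou2025}, choose the output-variation budget so both error terms are of order $k^{-\beta/d}$, rescale, and pad with zero units. There is, however, a missing step, and it hides in your sentence asserting that the relevant H\"older norm of $f_0$ is at most a multiple of $M$. The corollary concerns targets on the closed Euclidean unit ball $\mathbb B^d$, approximated by $g_0(z)=\sum_{m=1}^N c_m\rho((z,1)^\top v_m)$ with $v_m\in\mathbb S^d$. The cube $\Xcal$ is not contained in $\mathbb B^d$ once $d\geq2$. Even after shrinking by $R=\max\{1,\sqrt d\}$, the function $h(z)=f_0(Rz)$ is defined only on the proper subset $R^{-1}\Xcal$ (also when $d=1$), so the corollary's hypothesis cannot be checked for it. Comparing $\norm{\cdot}_\infty$ with $\norm{\cdot}_2$ does not address this. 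You need an extension of $h$ to $\mathbb B^d$ whose sup norm and H\"older constant stay bounded uniformly over $\Hcal_d^\beta(M)$. The paper uses $\widetilde h(z)=\inf_{a}\{h(a)+MR^\beta\norm{z-a}_2^\beta\}$, which keeps the H\"older constant by subadditivity of $t\mapsto t^\beta$, followed by clipping to $[-M,M]$ (Lemma~\ref{lem:extension}). Composing $h$ with coordinatewise clipping onto $R^{-1}\Xcal$, a $1$-Lipschitz retraction in $\norm{\cdot}_2$, would serve equally well. Only then does $h/K_0$ lie in the corollary's unit H\"older ball, for a constant $K_0$ depending only on $d,\beta,M$.

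With this in place, your remaining bookkeeping becomes simpler than planned. The corollary's error bound has two parameters, $C\{N^{-\beta/d}\vee V^{-2\beta/(d+3-2\beta)}\}$ under $\sum_m\abs{c_m}\leq V$. Taking $N=k$ and $V=k^{\omega-\beta/d}$ balances the two terms because $2\beta(\omega-\beta/d)/(d+3-2\beta)=\beta/d$; this is the check you anticipated. Write $v_m=(w_m,t_m)$ and set $a_m^*=K_0c_m$, $u_m^*=w_m/R$, $b_m^*=t_m$. Then $f_{\theta_k^*}(x)=K_0g_0(x/R)$ on $\Xcal$, and the sphere constraint together with $R\geq1$ gives \eqref{eq:approx-2} directly. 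These networks have no affine or constant part, so you need no auxiliary units, no homogeneity rescaling and no separate small-$k$ case. Those steps of yours are harmless but unnecessary.
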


Choose $k\asymp\varepsilon^{-d/\beta}$ so the approximator is
$\varepsilon/2$-accurate. The ReLU Lipschitz bound ensures that a
parameter box about $\theta_k^*$ of radius $\delta>0$ polynomially small
in $\varepsilon$ contains only $\varepsilon$-accurate functions.
Let $\mathsf K$ denote the random width and $\theta_k$ the parameter
vector conditional on $\mathsf K=k$. The prior probability of these
functions is therefore at least
\[
\PP(\mathsf K=k)\cdot
\PP\bigl(\norm{\theta_k-\theta_k^*}_\infty\leq\delta
\mid\mathsf K=k\bigr).
\]
It suffices to bound each negative log-probability by
$Ck\log(k/\delta)\asymp k\log(1/\varepsilon)$.
Section~\ref{sec:prior} imposes target-independent conditions giving
these bounds; the approximation and perturbation details are in
Appendices~\ref{app:approx} and~\ref{app:mass}.

\section{Sufficient conditions on the prior}
\label{sec:prior}\label{sec:mechanism}

We now give conditions on the width and network-parameter laws that ensure the
mass required in Section~\ref{sec:prior-mass}, without reference to
$n$, $\beta$ or $M$.

We consider one fixed hierarchical prior on finite-width ReLU
networks. Draw a positive integer width $\mathsf K:\Omega\to\mathbb N$
with mass function $\pi_{\mathsf K}(k)=\PP(\mathsf K=k)$ and then draw
$\theta_k:\Omega\to\mathbb R^{(d+2)k}$, with
$\theta_k=((A_m,U_m,B_m))_{m=1}^k$, from a law
$\Pi_k^\theta$ on $\mathbb R^{(d+2)k}$, conditional on
$\mathsf K=k$. Write $U_m=(U_{m1},\ldots,U_{md})$ for the
hidden-layer coefficient vector. The width and parameters are independent of the
covariates and errors. The random function
$F:\Omega\to\Ccal(\Xcal)$ is
\begin{equation}\label{eq:network}
F(x)=f_{\theta_{\mathsf K}}(x)
=\sum_{m=1}^{\mathsf K}A_m\rho(U_m^\top x+B_m),
\qquad x\in\Xcal.
\end{equation}
Its law $\Pi$ on $\Ccal(\Xcal)$ is well defined because the
parameter-to-function map is continuous at each finite width.
Neither the width law nor the conditional network-parameter laws depend on
$n$, $\beta$, or $M$, and independence between network-parameter coordinates
is not assumed.

\subsection{General sufficient conditions}
\label{sec:prior-conditions}

\paragraph{Width probabilities.}
The width law must not make large widths too improbable. For some
fixed $C_\pi>0$, require
\begin{equation}\label{eq:width}
\log\pi_{\mathsf K}(k)\geq -C_\pi k\log(ek),\qquad k\geq1.
\end{equation}
This permits a log-cost of order $k\log(1/\varepsilon)$ at the
required width. Examples include $\pi_{\mathsf K}(k)=2^{-k}$,
$\mathsf K\sim1+\mathrm{Poisson}(\lambda)$ for fixed $\lambda>0$,
and $\pi_{\mathsf K}(k)\propto k^{-a}$ for fixed $a>1$.
A width law with probabilities proportional to $e^{-k^2}$ is an example
that fails condition~\eqref{eq:width}.

\paragraph{Conditional network-parameter probabilities.}
Since $Ck^{\omega-\beta/d}\leq\tfrac12k^\omega$ eventually for
every $\beta,M$, the approximators satisfy the following bounds at
all large widths. Call $\theta^*=((a_m^*,u_m^*,b_m^*))_{m=1}^k$
an admissible centre if
\begin{equation}\label{eq:weight-centres}
\max_{1\leq m\leq k}\{\norm{u_m^*}_\infty,|b_m^*|\}\leq1,
\qquad \sum_{m=1}^k|a_m^*|\leq\tfrac12 k^\omega.
\end{equation}
For some fixed $C_w>0$, require
\begin{equation}\label{eq:weight-mass}
\begin{split}
-\log\Pi_k^\theta\{\theta:
\norm{\theta-\theta^*}_\infty\leq\delta\}
\leq C_w k\log(ek/\delta),
\end{split}
\end{equation}
for every $k\geq1$, every admissible centre $\theta^*$ at width $k$,
and every $0<\delta\leq1/2$.

Condition~\eqref{eq:weight-mass} requires mass at least
$(\delta/ek)^{C_wk}$ near each admissible centre: a log-cost
proportional to the number of parameters times the logarithm of the
required precision. This is the order obtained from uniform laws on
polynomially growing boxes. Only local mass matters; bounded supports
and dependent coordinates are allowed. Together with
\eqref{eq:width}, it gives the required function-space mass.

\begin{proposition}[Local prior mass]\label{prop:smallball}
Let $\Pi$ satisfy \eqref{eq:width} and \eqref{eq:weight-mass}. For
every $0<\beta\leq1$ and $M>0$ there are $C,\varepsilon_0>0$ such
that, for all $f_0\in\Hcal_d^\beta(M)$ and
$0<\varepsilon<\varepsilon_0$,
\begin{equation}\label{eq:smallball}
\Pi\{f:\norm{f-f_0}_\infty\leq\varepsilon\}
\geq\exp\{-C\varepsilon^{-d/\beta}\log(1/\varepsilon)\}.
\end{equation}
\end{proposition}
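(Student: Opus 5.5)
We prove the bound by restricting to one well-chosen width and combining Proposition~\ref{prop:approx} with a Lipschitz perturbation estimate and conditions \eqref{eq:width} and \eqref{eq:weight-mass}.

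\emph{Step 1: choice of width and centre.} Fix $f_0\in\Hcal_d^\beta(M)$ and $\varepsilon$ small. Let $C$ be the constant of Proposition~\ref{prop:approx} and set
\[
k=\bigl\lceil(2C/\varepsilon)^{d/\beta}\bigr\rceil .
\]
Then \eqref{eq:approx-1} gives $\norm{f_{\theta_k^*}-f_0}_\infty\leq\varepsilon/2$. Since $\omega-\beta/d<\omega$, there is $k_0$ depending only on $d,\beta,M$ with $Ck^{\omega-\beta/d}\leq\frac12k^\omega$ for $k\geq k_0$. Choosing $\varepsilon_0$ so that $k\geq k_0$ whenever $\varepsilon<\varepsilon_0$, \eqref{eq:approx-2}--\eqref{eq:approx-3} make $\theta_k^*$ an admissible centre in the sense of \eqref{eq:weight-centres}. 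Also $k\asymp\varepsilon^{-d/\beta}$, with constants independent of $f_0$.

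\emph{Step 2: perturbation.} Take $\theta$ with $\norm{\theta-\theta_k^*}_\infty\leq\delta\leq1/2$. Then $\norm{u_m}_\infty,|b_m|\leq 3/2$, so for $x\in\Xcal$,
\[
|\rho(u_m^\top x+b_m)|\leq \tfrac32(d+1).
\]
Using that $\rho$ is $1$-Lipschitz, write
\[
|f_\theta(x)-f_{\theta_k^*}(x)|\leq\sum_m|a_m-a_m^*|\,\rho(u_m^\top x+b_m)+\sum_m|a_m^*|\,\bigl|u_m^\top x+b_m-u_m^{*\top}x-b_m^*\bigr|.
\]
This is at most
\[
k\delta\cdot\tfrac32(d+1)+\tfrac12k^\omega(d+1)\delta\;\leq\;c_d\,k^{1\vee\omega}\delta .
\]
Choose $\delta=\varepsilon/(2c_dk^{1\vee\omega})$, which is at most $1/2$ for small $\varepsilon$. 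Then the parameter box lies inside $\{f:\norm{f-f_0}_\infty\leq\varepsilon\}$. Here $\delta$ is polynomial in $\varepsilon$, so $\log(ek/\delta)\lesssim\log(1/\varepsilon)$.

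\emph{Step 3: assembling the mass.} Measurability of the event is clear from continuity of the parameter-to-function map. By independence of the width from the conditional structure,
\[
\Pi\{\norm{f-f_0}_\infty\leq\varepsilon\}\geq\pi_{\mathsf K}(k)\,\Pi_k^\theta\{\norm{\theta-\theta_k^*}_\infty\leq\delta\}.
\]
By \eqref{eq:width} and \eqref{eq:weight-mass}, the negative logarithm of the right-hand side is at most
\[
C_\pi k\log(ek)+C_wk\log(ek/\delta)\lesssim k\log(1/\varepsilon)\lesssim\varepsilon^{-d/\beta}\log(1/\varepsilon).
\]
This gives \eqref{eq:smallball}, with constants depending only on $d,\beta,M$ and the prior.

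The main point needing care is Step 1. We need $\varepsilon_0$ small enough that the required width exceeds the threshold beyond which \eqref{eq:approx-3} implies the admissibility bound $\frac12k^\omega$, and this threshold must be uniform over the Hölder ball. It is uniform because the constant $C$ of Proposition~\ref{prop:approx} is uniform over $\Hcal_d^\beta(M)$. The rest is routine bookkeeping of polynomial factors inside logarithms.
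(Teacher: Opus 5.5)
Your proof is correct and follows essentially the same route as the paper's proof in Appendix~\ref{app:mass}. You take $k\asymp\varepsilon^{-d/\beta}$ and the centre from Proposition~\ref{prop:approx}, which is admissible for large $k$ uniformly over the ball. You then use the ReLU Lipschitz bound to show that a parameter box of radius polynomial in $\varepsilon$ maps into the $\varepsilon$-ball, and finish with \eqref{eq:width} and \eqref{eq:weight-mass}. The only difference is cosmetic: you size $\delta$ using the admissibility bound $\tfrac12k^\omega$ on the output coefficients, whereas the paper uses $\delta_\varepsilon=c_\delta\varepsilon/(k_\varepsilon+1+V_\varepsilon)$ with the actual coefficient sum; both give $\log(1/\delta)\lesssim\log(1/\varepsilon)$.
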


The proof is in Appendix~\ref{app:mass}. The conditions are
sufficient, not necessary: only centres supplied by
Proposition~\ref{prop:approx} are used in the proof.

\subsection{Priors satisfying the conditions}\label{sec:examples}

We verify \eqref{eq:weight-mass} for three familiar network-parameter families:
uniform, analogous to the slab component of
\citet{polsonrockova2018}; Gaussian, covered by \citet{kongkim2025};
and Student $t$, including Cauchy, as considered by
\citet{castilloegels2025}. Table~\ref{tab:families} specifies our
conditional laws and width scalings, not those papers' joint priors.
For $a<b$, $\Unif[a,b]$ denotes the uniform distribution on $[a,b]$.
Here $\mathrm N(0,s^2)$ denotes the normal distribution with mean
zero and variance $s^2$, and $t_\nu$ denotes the standard Student $t$
distribution with $\nu>0$ degrees of freedom.

\begin{table}[htbp]
\caption{Conditional network-parameter priors satisfying \eqref{eq:weight-mass}.}
\label{tab:families}
\centering\small
\begin{tabular}{@{}lll@{}}
\toprule
Family & $A_m$ & $U_{mj},B_m$\\
\midrule
Uniform & $\Unif[-k^\omega,k^\omega]$ & $\Unif[-2,2]$\\
Gaussian & $\mathrm N(0,k^{3/d})$ & $\mathrm N(0,1)$\\
Student $t$ & $t_\nu$ & $t_\nu$\\
\bottomrule
\end{tabular}
\end{table}

\begin{lemma}[Uniform, Gaussian and Student network-parameter laws]
\label{lem:families}
Fix $\nu>0$. For each $k\geq1$, let the coordinates $A_m$, $U_{mj}$
and $B_m$ $(1\leq m\leq k,\ 1\leq j\leq d)$ of $\theta_k$ be
independent under $\Pi_k^\theta$, with laws given by one row of
Table~\ref{tab:families}. Then $\Pi_k^\theta$ satisfies
\eqref{eq:weight-mass} with a constant $C_w$ depending only on $d$
for the uniform and Gaussian rows, and only on $d$ and $\nu$ for the
Student row.
\end{lemma}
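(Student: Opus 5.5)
Since the coordinates are independent under $\Pi_k^\theta$, the box probability factorises:
\[
\Pi_k^\theta\{\norm{\theta-\theta^*}_\infty\leq\delta\}
=\prod_{m=1}^k\Bigl[\PP(|A_m-a_m^*|\leq\delta)\,\PP(|B_m-b_m^*|\leq\delta)\prod_{j=1}^d\PP(|U_{mj}-u_{mj}^*|\leq\delta)\Bigr].
\]
It therefore suffices to show two things for an admissible centre \eqref{eq:weight-centres} and $0<\delta\leq1/2$. First, each hidden-layer or bias factor has $-\log\lesssim\log(1/\delta)$, since these centres lie in $[-1,1]$. Second, each output factor has $-\log\lesssim\log(ek/\delta)$, since the centres satisfy $|a_m^*|\leq\tfrac12k^\omega$ individually. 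Summing $(d+2)k$ such terms then gives $C_wk\log(ek/\delta)$, using $\log(1/\delta)\leq\log(ek/\delta)$. In every case I would bound each factor below by $2\delta$ times the infimum of the coordinate density over the interval $[c-\delta,c+\delta]$.

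\emph{Uniform row.} For $|u^*|\leq1$ and $\delta\leq1/2$, the interval $[u^*-\delta,u^*+\delta]$ lies inside $[-2,2]$. The factor is therefore exactly $2\delta/4=\delta/2$, with $-\log$ equal to $\log(2/\delta)\leq 2\log(1/\delta)$, which is fine since $\delta\leq1/2$. For $|a^*|\leq\tfrac12k^\omega$, the interval lies in $[-k^\omega,k^\omega]$ because $\tfrac12k^\omega+\tfrac12\leq k^\omega$. The factor is then $\delta/k^\omega$, with $-\log=\log(k^\omega/\delta)\leq\omega\log(ek/\delta)$, and $\omega\leq2$ depends only on $d$.

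\emph{Gaussian row.} For the $\mathrm N(0,1)$ coordinates, the density on $[-3/2,3/2]$ is at least $\phi(3/2)$, so the factor is at least $2\delta\phi(3/2)$. For $A_m\sim\mathrm N(0,s^2)$ with $s=k^{3/(2d)}$, the density at points of modulus at most $\tfrac12k^\omega+\tfrac12\leq k^\omega$ is at least $(2\pi)^{-1/2}s^{-1}\exp\{-k^{2\omega}/(2s^2)\}$. Here $2\omega=(d+3)/d=1+3/d$ and $s^2=k^{3/d}$, so the exponent is $k/2$. This is the step I expect to be the crux: the variance $k^{3/d}$ is exactly what turns the Gaussian tail penalty into $O(k)$ per coordinate, i.e.\ $O(k^2)$ in total. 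That would be too much if paid per unit.

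It is not, because $\sum_m|a_m^*|\leq\tfrac12k^\omega$ is an $\ell_1$ constraint, so the penalty should be computed from the sum of squares. I would write the product of output factors as at least $\prod_m 2\delta(2\pi s^2)^{-1/2}\exp\{-(|a_m^*|+\delta)^2/(2s^2)\}$. Then I would use $\sum_m(|a_m^*|+\delta)^2\leq(\sum_m|a_m^*|+k\delta)^2\leq(\tfrac12k^\omega+k/2)^2\lesssim k^{2\omega}+k^2$. Dividing by $s^2=k^{3/d}$ gives $k^{1}+k^{2-3/d}$. The second term is too large when $d\geq 4$, so I would instead bound $\sum_m\delta^2\leq k/4$ separately via $(x+\delta)^2\leq2x^2+2\delta^2$. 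That gives $\sum_m(|a_m^*|+\delta)^2\leq2(\tfrac12k^\omega)^2+k/2$, and after division by $s^2\geq1$ the total is $O(k)$. The remaining normalisation terms $k\log(s/\delta)\lesssim k\log(ek/\delta)$ are harmless. The total $-\log$ mass is thus $\lesssim k\log(ek/\delta)$ with constants depending only on $d$.

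\emph{Student row.} The $t_\nu$ density satisfies $p(x)\geq c_\nu(1+x^2)^{-(\nu+1)/2}$ for all $x$. On $[-3/2,3/2]$ this gives factors of at least $2\delta c_\nu(13/4)^{-(\nu+1)/2}$. For the output coordinates, $|x|\leq k^\omega$ gives a factor of at least $2\delta c_\nu(1+k^{2\omega})^{-(\nu+1)/2}$. Its $-\log$ is at most $\log(1/\delta)+(\nu+1)\omega\log(2k)+O_\nu(1)$, which is $\lesssim_{\nu,d}\log(ek/\delta)$. Here the polynomial tail makes magnitude cost only logarithmically, with no $\ell_1$ argument needed. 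Collecting the three rows proves \eqref{eq:weight-mass}.
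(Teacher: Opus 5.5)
Your proposal is correct and follows the paper's proof essentially step for step: factorisation by independence, exact interval probabilities for the uniform row, the bound $(|z|+\delta)^2\leq2z^2+2\delta^2$ together with $\sum_m|a_m^*|^2\leq(\sum_m|a_m^*|)^2$ so that the Gaussian output penalty is $O(k)$ in total rather than per unit, and a polynomial density lower bound for the Student row. One cosmetic slip is in the uniform row: for $d\geq4$ one has $\omega<1$, so $\log(k^\omega/\delta)\leq\omega\log(ek/\delta)$ fails for small $\delta$, but $\log(k^\omega/\delta)\leq\max\{1,\omega\}\log(ek/\delta)\leq2\log(ek/\delta)$ gives what you need with a constant still depending only on $d$.
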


The families differ in how they reach large output coefficients:
uniform support expands to contain them, Gaussian variance grows to
control their quadratic log-penalty, while Student tails incur only
logarithmic penalties without rescaling ($\nu=1$ gives Cauchy).
Hidden-layer coefficients and biases need no rescaling. Unit-variance Gaussian
output coefficients would fail \eqref{eq:weight-mass}, which allows a
single coefficient of size $\tfrac12k^\omega$.
The Student argument extends to any fixed density with a polynomial
lower bound.

\begin{proposition}[Polynomial density lower bounds]\label{cor:polytail}
Let $q$ be a probability density on $\mathbb R$ such that
\[
q(z)\geq c(1+|z|)^{-r},\qquad z\in\mathbb R,
\]
for some $c>0$ and $r>1$. If, for each $k\geq1$, the coordinates of
$\theta_k$ are independent under $\Pi_k^\theta$, each with density
$q$, then $\Pi_k^\theta$ satisfies \eqref{eq:weight-mass} with a
constant $C_w$ depending only on $d$, $c$ and $r$.
\end{proposition}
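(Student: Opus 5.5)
By independence, the box probability in \eqref{eq:weight-mass} factorizes over the $(d+2)k$ coordinates:
\[
\Pi_k^\theta\{\norm{\theta-\theta^*}_\infty\leq\delta\}
=\prod_{\text{coordinates } z^*}\int_{z^*-\delta}^{z^*+\delta}q(z)\,\mathrm dz .
\]
It therefore suffices to show that each single-coordinate factor has negative logarithm at most $C\log(ek/\delta)$. Summing then gives a total of at most $C(d+2)k\log(ek/\delta)$, so $C_w=C(d+2)$.

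For each coordinate with centre $z^*$, the lower bound on $q$ and $|z|\leq|z^*|+\delta\leq|z^*|+1/2$ on the interval give
\[
\int_{z^*-\delta}^{z^*+\delta}q\geq 2\delta c\,(3/2+|z^*|)^{-r},
\]
so the negative log-mass is at most
\[
\log\frac1{2c}+\log\frac1\delta+r\log\bigl(\tfrac32+|z^*|\bigr).
\]
The centres are controlled by admissibility \eqref{eq:weight-centres}. Hidden-layer coordinates and biases satisfy $|z^*|\leq1$, so their last term is at most $r\log(5/2)$. Each output coefficient satisfies $|a_m^*|\leq\sum_{m'}|a_{m'}^*|\leq\tfrac12k^\omega$, so its last term is at most $r\log(\tfrac32+\tfrac12k^\omega)\leq r\omega\log(ek)+r\log 2$.

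Every per-coordinate bound is therefore at most
\[
\log\tfrac1{2c}\vee0+\log\tfrac1\delta+r(1\vee\omega)\log(ek)+r\log(5/2).
\]
Since $\delta\leq1/2$ and $k\geq1$ give $\log(ek/\delta)\geq1+\log2$, the additive constants are absorbed. Each coordinate's cost is thus at most $C(c,r,d)\log(ek/\delta)$, where $\omega=(d+3)/(2d)$ is where the dependence on $d$ enters.

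No step is genuinely hard. The only point needing care is that the crude bound $|a_m^*|\leq\tfrac12k^\omega$, applied to each output coefficient individually, costs only $O(\log k)$ per coordinate and so suffices. If one wanted a sharper constant, one could instead use Jensen or concavity on $\sum_m\log(\tfrac32+|a_m^*|)$, but this is not needed.
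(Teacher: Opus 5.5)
Your proposal is correct and follows the paper's argument. Independence factorizes the box probability, and the density lower bound gives each interval mass at least a constant times $\delta(1+|z^*|)^{-r}$; the paper writes $2c\delta\,2^{-r}(1+|z^*|)^{-r}$ where you use $(3/2+|z^*|)^{-r}$. Admissibility caps each output centre by $k^\omega/2$, so the total cost is $Ck+(d+2)k\log(1/\delta)+rk\log(1+k^\omega/2)\lesssim k\log(ek/\delta)$, and absorbing constants per coordinate rather than after summing is only a cosmetic difference.
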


This includes Student $t_\nu$ ($r=\nu+1$) and any fixed density
mixture with a positive Cauchy component.
Paired with any width law satisfying \eqref{eq:width}, these give
priors covered by the results below. The probability estimates are
in Appendix~\ref{app:weight-boxes}.

\section{Contraction and prediction risk}\label{sec:results}

Lemma~\ref{lem:frac} and Proposition~\ref{prop:smallball} give the
near-minimax rate derived in Section~\ref{sec:prior-mass}.
Theorem~\ref{cor:risk} states the resulting
posterior-mean risk; Theorem~\ref{thm:main} provides the underlying
contraction bounds.

To pass from R\'enyi divergence to squared prediction loss, we need
bounded functions. Write $C_b(t)=(-b)\vee(t\wedge b)$ for clipping
at $b>0$. The data-based choice below avoids a known target bound.

\begin{theorem}[Near-minimax posterior-mean risk]\label{cor:risk}
Let $\Pi$ be a fixed hierarchical prior satisfying
\eqref{eq:width} and \eqref{eq:weight-mass}, and fix $0<\eta<1$.
There is a sequence of finite positive data-dependent cutoffs
$\widetilde B_n=\widetilde B_n(\Dcal_n)$, $n\geq3$, depending
only on $n,d$ and the data, with the following property. For every
$0<\beta\leq1$, $M>0$, and design law $P_X$ with Lebesgue density
$p_X$ satisfying, for fixed $0<c_X\leq C_X<\infty$ and
Lebesgue-almost every $x\in\Xcal$,
\begin{equation}\label{eq:design}
0<c_X\leq p_X(x)\leq C_X<\infty,
\end{equation}
the posterior means
\begin{equation}\label{eq:adaptive-mean}
\widetilde f_n(x)=\int_{\Ccal(\Xcal)}
 C_{\widetilde B_n}(f(x))\,
 \Pi_n^\eta(\mathrm d f\mid\Dcal_n)
\end{equation}
satisfy, for all sufficiently large $n$,
\begin{equation}\label{eq:risk}
\begin{split}
\sup_{f_0\in\Hcal_d^\beta(M)}\E_{f_0}
\norm{\widetilde f_n-f_0}_{\Lcal^2(P_X)}^2
\lesssim(\log n/n)^{2\beta/(2\beta+d)}.
\end{split}
\end{equation}
The upper bound requires only the lower density bound;
under both bounds in \eqref{eq:design},
\begin{equation}\label{eq:lower}
\begin{split}
\inf_{f_n}\sup_{f_0\in\Hcal_d^\beta(M)}\E_{f_0}
\norm{f_n-f_0}_{\Lcal^2(P_X)}^2
\gtrsim n^{-2\beta/(2\beta+d)},
\end{split}
\end{equation}
where the infimum is over all measurable estimators. Constants may
depend on $c_X$ and $C_X$.
\end{theorem}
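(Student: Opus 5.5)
Three pieces combine here: R\'enyi contraction from Lemma~\ref{lem:frac} and Proposition~\ref{prop:smallball}, a data-based clipping level that dominates $M$ with high probability, and a deterministic comparison between R\'enyi divergence and squared loss for bounded functions. The lower bound \eqref{eq:lower} is then Stone's classical H\"older minimax bound.

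\textbf{Contraction.} Set $\varepsilon_n=(\log n/n)^{\beta/(2\beta+d)}$. Proposition~\ref{prop:smallball} gives $-\log\Pi\{\norm{f-f_0}_\infty\le\varepsilon_n\}\lesssim\varepsilon_n^{-d/\beta}\log(1/\varepsilon_n)\lesssim n\varepsilon_n^2$, uniformly over $\Hcal_d^\beta(M)$. Lemma~\ref{lem:frac} then yields, for a suitable $J$,
\[
\sup_{f_0}\E_{f_0}\Pi_n^\eta\bigl(D_\eta(f,f_0)>J\varepsilon_n^2\mid\Dcal_n\bigr)\le 2e^{-cn\varepsilon_n^2}.
\]
This bound is $o(\varepsilon_n^2)$, even after multiplication by a polylogarithmic clipping level.

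\textbf{Cutoff.} I would take $\widetilde B_n=\log n\,(1\vee\max_i|Y_i|)$, or a similar quantity; it depends only on $n$ and the data. It should be large enough that $\widetilde B_n\ge M$ except on an event of very small probability. The error on that event is controlled by $\E_{f_0}[(\widetilde B_n+M)^2\mathbf 1\{\widetilde B_n<M\}]$. For the stated choice, $\widetilde B_n\ge\log n\ge M$ deterministically once $n$ is large, so this term vanishes. Its polylogarithmic size is then the only cost.

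The paper instead uses an estimated range, with exponential probability bounds coming from $c_X$. If I need a cutoff of constant order, I would use a local-average estimate of $\norm{f_0}_\infty$: every cube of side $h$ has $P_X$-mass at least $c_Xh^d$, which makes the local estimate reliable. A logarithmic cutoff, however, makes that step unnecessary.

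\textbf{Comparison.} Let $g=C_{\widetilde B_n}\circ f$. When $\widetilde B_n\ge M$, clipping is a contraction toward $[-\widetilde B_n,\widetilde B_n]\ni f_0(x)$, so $|g-f_0|\le|f-f_0|$ pointwise and $|g-f_0|\le2\widetilde B_n$. For $|t|\le T$ we have $1-e^{-\eta(1-\eta)t^2/2}\ge c\,t^2/T^2$, with $c$ depending only on $\eta$. Combining this with $-\log(1-z)\ge z$,
\[
\norm{g-f_0}_{\Lcal^2(P_X)}^2\lesssim\widetilde B_n^2\,D_\eta(f,f_0).
\]
Note that the inequality uses $|g-f_0|$ in the exponent, while $D_\eta$ involves $f$. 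This is fine because the map $t\mapsto1-e^{-ct^2}$ is increasing, so $D_\eta(f,f_0)\ge D_\eta(g,f_0)$.

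By Jensen, $\norm{\widetilde f_n-f_0}^2\le\int\norm{C_{\widetilde B_n}f-f_0}^2\,d\Pi_n^\eta$. Split this integral into the region $D_\eta\le J\varepsilon_n^2$, which contributes $\lesssim\widetilde B_n^2\varepsilon_n^2$, and its complement, which contributes $\le4\widetilde B_n^2\,\Pi_n^\eta(D_\eta>J\varepsilon_n^2)$. Taking expectations and using Cauchy--Schwarz for the moments of $\widetilde B_n$ (Gaussian maxima give $\E\widetilde B_n^4\lesssim(\log n)^4$) produces the bound.

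\textbf{Main obstacle.} The hard step is achieving exactly the rate in \eqref{eq:risk} without extra logarithmic factors. The factor $\widetilde B_n^2$ must be $O(1)$ on the good region, so $\widetilde B_n$ should be of order $M$ with high probability. This is where the lower density bound enters.

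I would partition $\Xcal$ into cubes of fixed side $h$ and take $\widetilde B_n=2\max_{\text{cubes}}|\text{average of }Y_i\text{ in cube}|+1$, possibly with a slowly growing grid. Using $c_X$, Hoeffding's inequality for the cube counts, and Gaussian concentration for the noise averages, $\widetilde B_n\in[M,C(M)]$ outside an event of probability $e^{-cn}$.

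Two points need care. First, $\widetilde B_n$ must not depend on the unknown $\beta$ or $M$. This can be arranged by letting $h=h_n\to0$ slowly, so that the oscillation $Mh_n^\beta$ falls below any fixed fraction of $M$ eventually. Second, the Cauchy--Schwarz step on the bad event needs moments of $\widetilde B_n$ of polylogarithmic size times $e^{-cn}$, which is negligible.
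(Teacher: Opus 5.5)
Your final construction (a cell-average cutoff on a slowly refining grid, control of the good event through $c_X$, the concavity comparison between clipped squared loss and $D_\eta$, and a Jensen split at $J\varepsilon_n^2$ with Cauchy--Schwarz on the exceptional event) is essentially the paper's proof of the upper bound, which uses $\lceil\log n\rceil$ cells per side and margin $2$; for \eqref{eq:lower} you cite Stone's classical bound, where the paper writes out the standard hypercube bump argument directly. Two small corrections: on the good event you need $\widetilde B_n\geq\norm{f_0}_\infty$, not $\widetilde B_n\geq M$ (the latter fails for targets with small range), and with cells of side $h_n\to0$ the failure probability is of order $h_n^{-d}e^{-cnh_n^d}$ rather than $e^{-cn}$, which is still negligible.
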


The same estimator adapts to every stated H\"older ball and design
law, losing only $(\log n)^{2\beta/(2\beta+d)}$ relative to the
classical minimax bound \citep{stone1982}; see
Section~\ref{sec:log-gap}. Its upper risk bound follows from the
next contraction result.

\begin{theorem}[Fractional-posterior contraction]\label{thm:main}
Let $\Pi$ be a fixed hierarchical prior satisfying
\eqref{eq:width} and \eqref{eq:weight-mass}. Fix $0<\beta\leq1$,
$M>0$, and $0<\eta<1$. There are $A,C,c>0$ such that, with
\begin{equation}\label{eq:rate}
\varepsilon_n=A(\log n/n)^{\beta/(2\beta+d)},\qquad n\geq3,
\end{equation}
the following hold for all sufficiently large $n$.

\textup{(i)} \emph{R\'enyi contraction.} For every design law $P_X$,
\begin{equation}\label{eq:renyi-result}
\sup_{f_0\in\Hcal_d^\beta(M)}\E_{f_0}\Pi_n^\eta
\bigl(D_\eta(f,f_0)>C\varepsilon_n^2\mid\Dcal_n\bigr)
\leq2e^{-cn\varepsilon_n^2}.
\end{equation}

\textup{(ii)} \emph{$\Lcal^2$ contraction after clipping.} The
cutoffs $\widetilde B_n$ of Theorem~\ref{cor:risk} can be chosen so
that, in addition, the image $\widetilde\Pi_n^\eta$ of
$\Pi_n^\eta(\cdot\mid\Dcal_n)$ under
$f\mapsto C_{\widetilde B_n}\circ f$ satisfies, whenever $P_X$ has a density $p_X\geq c_X>0$
Lebesgue-almost everywhere,
\begin{equation}\label{eq:l2-result}
\sup_{f_0\in\Hcal_d^\beta(M)}\E_{f_0}\widetilde\Pi_n^\eta
\bigl(\norm{g-f_0}_{\Lcal^2(P_X)}^2>C'\varepsilon_n^2
\mid\Dcal_n\bigr)\leq C'e^{-c'n\varepsilon_n^2}
\end{equation}
for constants $C',c'>0$ that may also depend on $c_X$.

The prior is the same for every $n$, $\beta$ and $M$, and the cutoff
rule uses only $n$, $d$ and the data.
\end{theorem}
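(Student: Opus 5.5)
Part (i) follows by combining Proposition~\ref{prop:smallball} with Lemma~\ref{lem:frac}. Put $\varepsilon_n=A(\log n/n)^{\beta/(2\beta+d)}$. Then $\varepsilon_n^{-d/\beta}\log(1/\varepsilon_n)\lesssim A^{-d/\beta}(n/\log n)^{d/(2\beta+d)}\log n$, whereas $n\varepsilon_n^2=A^2 n^{d/(2\beta+d)}(\log n)^{2\beta/(2\beta+d)}$. The ratio of the two is therefore of order $A^{-d/\beta-2}$ times a bounded factor. Since $\varepsilon_n<\varepsilon_0$ for large $n$, \eqref{eq:smallball} yields the prior-mass condition of Lemma~\ref{lem:frac} with some $c_0$; one may take $A=1$ and $c_0$ equal to the resulting constant. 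Choosing $J=C>(c_0+1)/(1-\eta)$ makes both exponents in \eqref{eq:frac-general} at least $cn\varepsilon_n^2$ with $c=\min\{1/8,(1-\eta)J-c_0-\eta\}$, which gives \eqref{eq:renyi-result}. Uniformity over $P_X$ holds because the sup-norm mass does not involve the design.

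For part (ii), the plan is to define $\widetilde B_n$ from the data, show that it covers $\norm{f_0}_\infty$ with high probability, and then convert Rényi divergence into $\Lcal^2$ loss on bounded functions. A natural choice is $\widetilde B_n=2\max_i|Y_i|+1$, or $\max_i|Y_i|+\sqrt{2\log n}$ plus a slowly growing term such as $\log n$; the essential requirement is that $\widetilde B_n\le c_1\log n$ with high probability. The lower density bound is used as follows. For large $n$, each cube of side $h$ with $Mh^\beta\le \norm{f_0}_\infty/2$ contains some $X_i$, except on an event of probability $(1/h^d)e^{-nc_Xh^d}$. That $X_i$ satisfies $|f_0(X_i)|\ge\norm{f_0}_\infty/2$, so $|Y_i|\ge \norm{f_0}_\infty/2-|\xi_i|$. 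Combining this with Gaussian tails gives $\PP_{f_0}(\widetilde B_n<\norm{f_0}_\infty)\le e^{-c'n\varepsilon_n^2}$ once the additive slack is tuned; a cleaner route uses the slack $\sqrt{\log n}$ and controls only the single nearest design point. Clipping is $1$-Lipschitz and fixes $f_0$ on this event, so $|C_b f-f_0|\le|f-f_0|$ pointwise and $D_\eta(C_bf,f_0)\le D_\eta(f,f_0)$, since $D_\eta$ is monotone in $|f-f_0|$.

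Next I would use the bounded comparison. If $|g-f_0|\le 2b$, then $1-e^{-t}\ge t(1-e^{-T})/T$ on $[0,T]$ with $T=2\eta(1-\eta)b^2$. This gives $\norm{g-f_0}^2_{\Lcal^2(P_X)}\lesssim b^2\,(1-e^{-(1-\eta)D_\eta})\le b^2(1-\eta)D_\eta$, using $-\log(1-u)\ge u$. With $b=\widetilde B_n$ this costs a factor $\log n$.

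\textbf{The main obstacle} is this polylog loss from $\widetilde B_n^2$, which would spoil the rate. To remove it, I would choose the cutoff so that $\widetilde B_n\le 2M+1$ with high probability. A median-type rule does this: let $\widetilde B_n$ be twice the largest, over a partition of $\Xcal$ into about $n/\log n$ cubes, of the median of the $|Y_i|$ in each cube, plus one. The lower density bound guarantees each cube holds about $\log n$ points, so concentration of the medians holds outside an event of probability $e^{-c\log n\cdot n/\log n}$, which is at most $e^{-c'n\varepsilon_n^2}$. On the good event $\widetilde B_n\in[\norm{f_0}_\infty,2M+3]$, and the constant depends on $M$, which is allowed. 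Part (ii) then follows from part (i) plus the probability of the bad event.

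For Theorem~\ref{cor:risk}, apply Jensen to get $\norm{\widetilde f_n-f_0}^2\le\int\norm{g-f_0}^2\,d\widetilde\Pi_n^\eta$, and split at $C'\varepsilon_n^2$. The tail contribution is bounded by $(\widetilde B_n+M)^2$ times the posterior tail. Its expectation is controlled by Cauchy–Schwarz, using $\E\widetilde B_n^4\lesssim(\log n)^2$ together with the exponential tail bound. The lower bound \eqref{eq:lower} is Stone's classical result, transferred to $P_X$ through the density bounds by the standard Fano/Assouad bump construction.
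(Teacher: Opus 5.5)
Your part (i) is the paper's argument: the paper enlarges $A$ so that $c_0=1$, whereas you keep $A=1$ and let $c_0$ float, and either works. Your R\'enyi-to-$\Lcal^2$ transfer on the event $\norm{f_0}_\infty\le\widetilde B_n\le 2M+3$ is the paper's clipping lemma.

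The gap is in the probability of that event for your median rule. Because $\widetilde B_n$ is a maximum over roughly $n/\log n$ cells, a single bad cell breaks either inequality. The failure probability is therefore controlled by a union bound, roughly $(n/\log n)\,e^{-c\log n}=n^{1-c}/\log n$. Your expression $e^{-c\log n\cdot n/\log n}$ multiplies the per-cell exponents instead of summing the per-cell probabilities. The correct bound is at best polynomially small, whereas (ii) needs $e^{-c'n\varepsilon_n^2}$ with $n\varepsilon_n^2\asymp n^{d/(2\beta+d)}(\log n)^{2\beta/(2\beta+d)}$.

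It is actually worse than that. The rule cannot use $c_X$, so its cell count is fixed, giving expected cell counts of order $c_XK\log n$ for some fixed $K$. When $p_X$ is close to a small $c_X$ on a set of positive measure, $c_XK<1$. Polynomially many cells then hold a single observation, and the largest of their medians grows like $\sqrt{\log n}$ with probability tending to one. So even $\widetilde B_n\le 2M+3$ fails, and the logarithmic loss you set out to remove returns.

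The missing observation is that the cells should be coarse, not fine. With a maximum over cells, every cell needs $\gtrsim n\varepsilon_n^2$ observations. Since the rule cannot depend on $\beta$, this forces $n^{o(1)}$ cells. Resolution is not needed: you only want $\norm{f_0}_\infty\le\widetilde B_n\le\mathrm{const}$, and a fixed additive margin absorbs the within-cell H\"older variation $Ms^\beta$ as soon as the cell side $s$ tends to zero.

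The paper uses $J_n=\lceil\log n\rceil^d$ cells of side $1/\lceil\log n\rceil$, with cell means and margin $2$. Every cell has at least $c_Xn/(2J_n)$ points outside probability $J_ne^{-c_Xn/(8J_n)}$. All cell noise means are at most one outside a probability of the same form. Since $n/J_n\gg n\varepsilon_n^2$, both are eventually at most $e^{-cn\varepsilon_n^2}$. Your median statistic, with its factor two and margin one, would also work on this coarse partition.
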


\begin{remark}[Computing the cutoff]\label{rem:clipping-construction}
For $n\geq3$, put $m_n=\lceil\log n\rceil$ and partition
$\Xcal$ into $J_n=m_n^d$ cubes $\mathcal Q_{j,n}$ of side
$1/m_n$, using half-open intervals with the outer faces included.
Let $N_{j,n}$ be the number of observations in cube $j$ and
$\overline Y_{j,n}$ their response mean, set to zero when
$N_{j,n}=0$, and define
\begin{equation}\label{eq:adaptive-clipping}
\widetilde B_n=2+\max_{1\leq j\leq J_n}|\overline Y_{j,n}|.
\end{equation}
Counts and response sums give this cutoff in one pass and
$O(dn+J_n)$ operations, without sample splitting or knowledge of
$\beta$, $M$ or $P_X$. The margin $2$ absorbs cell-mean noise and
within-cell variation. Under the lower density bound, for all large
$n$,
\[
\sup_{f_0\in\Hcal_d^\beta(M)}
\PP_{f_0}\{\norm{f_0}_\infty>\widetilde B_n
\text{ or }\widetilde B_n>M+3\}\leq CJ_ne^{-cn/J_n}.
\]
On the complementary event, clipping cannot increase R\'enyi
divergence, and boundedness lets it control squared
$\Lcal^2(P_X)$ loss.
\end{remark}

\begin{remark}[The clipped draws remain ReLU networks]
For every $b>0$,
\[
C_b(t)=\rho(t+b)-\rho(t-b)-b.
\]
Thus each clipped draw is an exact finite ReLU network, in general
with one additional hidden layer; the posterior mean need not be.
\end{remark}

\begin{remark}[Arbitrary design laws]\label{rem:arbitrary-design}
There is another finite positive data-dependent cutoff
$\widetilde B_n^\circ$, again using only $n,d$ and the data, for
which every $P_X$ satisfies
\begin{equation}\label{eq:designfree-l2}
\sup_{f_0\in\Hcal_d^\beta(M)}\E_{f_0}\Pi_n^\eta
\bigl(\norm{C_{\widetilde B_n^\circ}\circ f-f_0}_{\Lcal^2(P_X)}^2
>C\varepsilon_n^2\mid\Dcal_n\bigr)
\leq Ce^{-cn\varepsilon_n^2}+Cn^{-15/4}.
\end{equation}
The posterior mean retains the upper risk rate in \eqref{eq:risk}.
The construction in Appendix~\ref{app:arbitrary-design} controls
clipping bias instead of covering the entire target range.
With a known $B\geq M$, deterministic clipping at $B$ gives a purely
exponential tail for every $P_X$.
\end{remark}

Proofs are in Appendices~\ref{app:posterior} and~\ref{app:minimax}.
Appendix~\ref{app:bounded-support} extends the R\'enyi and
arbitrary-design clipping results to bounded Borel covariate supports,
using ambient H\"older continuity in $\mathbb R^d$.

\setlength{\dbltextfloatsep}{6pt plus 2pt minus 2pt}

\section{Numerical assessment}
\label{sec:simulation}

\subsection{Simulation setup}
For $d=1$, $P_X=\Unif[0,1]$, Gaussian errors with variance one, and
$\eta=0.9$, we assess the squared prediction error in
{Theorem~\ref{cor:risk}} at the fixed targets
{
\[
f_\beta(x)=\abs{\sin(2\pi x)}^\beta,
\qquad \beta\in\{1/2,1\}.
\]
Their global H\"older bounds are $\sqrt{2\pi}$ and $2\pi$.
Both have cusps at $1/2$ and peaks at $1/4$ and $3/4$.
Neither has an exact finite shallow ReLU representation;
see Appendix~\ref{app:simulation} for verification of these facts.}

For each target, sample size and parameter family, we use 50
replications and retain the first computation stage that passes the
diagnostics in Appendix~\ref{app:simulation-computation}.
The sample sizes are $n\in\{2^j:j=6,\ldots,12\}$.

The baseline (N/G) uses the Gaussian row of
Table~\ref{tab:families} and geometric width $\pi_G(k)=2^{-k}$.
We estimate $\widetilde B_n$ from the fitting data and average
clipped draws as in \eqref{eq:adaptive-mean}. Both targets are bounded
by one and $\widetilde B_n\geq2$.
{Computation makes the simplification of restricting the width to
$\mathsf K\leq64$ and uses a 1009-point grid of values for numerical integration.}

{Figure~\ref{fig:lipschitz-credible} shows a H\"older $1/2$ fit and a Lipschitz fit at each displayed sample size under the uniform parameter prior. The true central-notch value zero lies in none of the displayed pointwise intervals, although we note that these fractional-posterior intervals carry no frequentist coverage guarantee.}

\begin{figure}[t]
\centering
\includegraphics[width=0.9\textwidth]{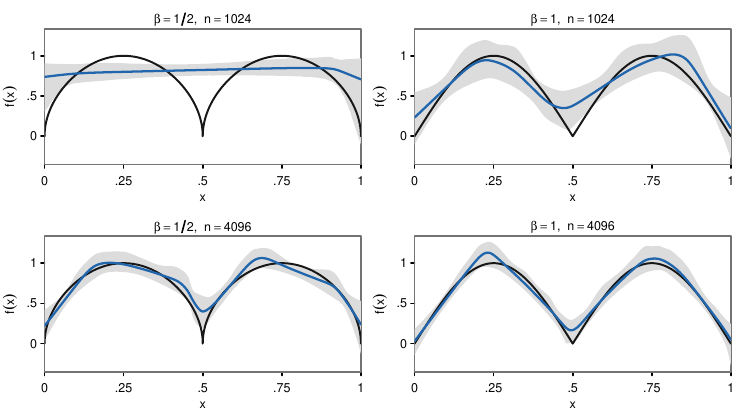}
\caption{Example fits of the H\"older $1/2$ case (left) and Lipschitz case (right), at $n=1024$ (top) and $4096$ (bottom), using the uniform parameter prior with geometric width and $\eta=0.9$. Truth (black), clipped posterior mean (blue), and pointwise 95\% credible intervals (grey).}
\label{fig:lipschitz-credible}
\end{figure}

\subsection{Rate and prior sensitivity}
{Figure~\ref{fig:adaptive-risk} reports mean integrated
squared errors with 95\% normal Monte Carlo confidence intervals
across the 50 replications per case (Appendix~\ref{app:simulation-adaptive-risk}).
The grey guides show $q_{n,\beta}=(\log n/n)^{2\beta/(2\beta+1)}$
(upper) and $n^{-2\beta/(2\beta+1)}$ (lower), both matched to
N/G at $n=64$ with illustrative constants.}

\begin{figure}[!t]
\centering
\includegraphics[width=\textwidth]{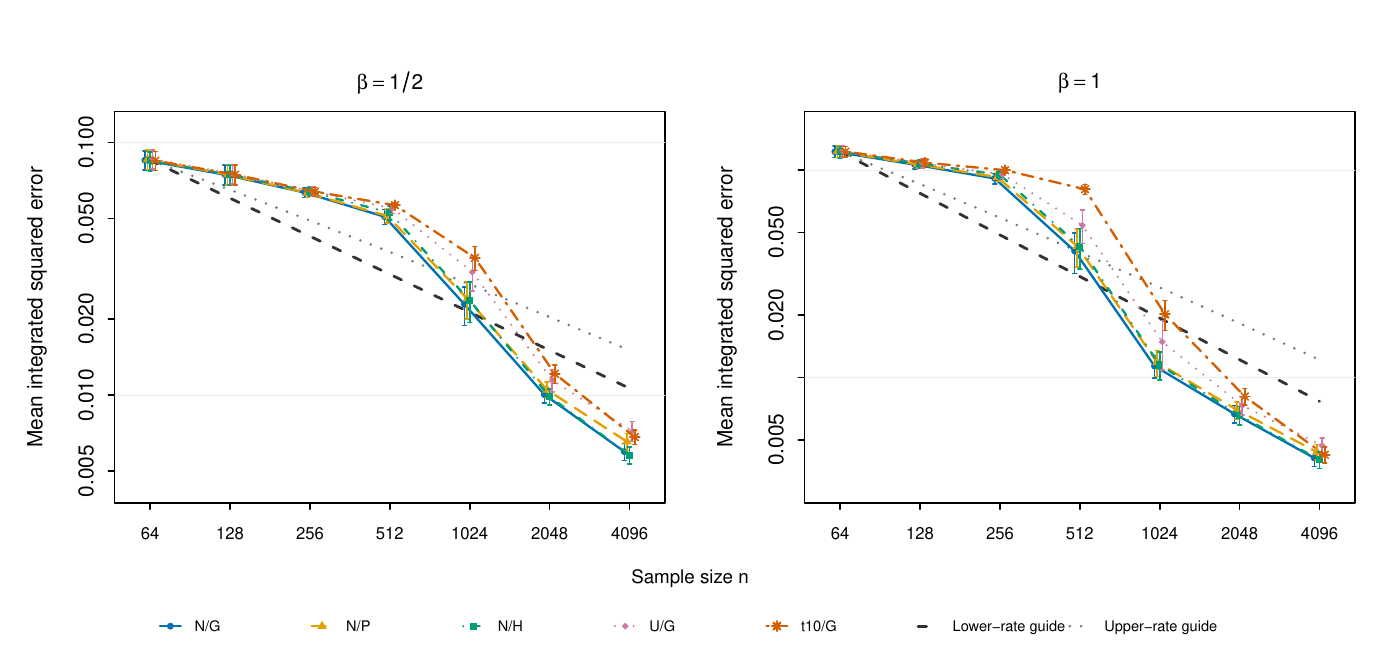}
\caption{{Mean squared errors with 95\% Monte Carlo
confidence intervals from 50 replications per case. N/U/$t_{10}$:
Gaussian/uniform/Student $t_{10}$ parameters; G/P/H:
geometric/shifted Poisson/power-law widths. Grey illustrative
guides: lower (dashed), upper (dotted).}}
\label{fig:adaptive-risk}
\end{figure}

{At $n=4096$, N/G mean errors are $0.00597$ for $\beta=1/2$ and $0.00408$ for $\beta=1$. U/G and $t_{10}$/G mean errors are $0.00719$ and $0.00681$ for $\beta=1/2$, and $0.00469$ and $0.00423$ for $\beta=1$. Across sample sizes and both reweighted Gaussian width laws, the mean errors differ from N/G by at most $9.5\%$ for $\beta=1/2$ and $6.9\%$ for $\beta=1$. For N/G, from $2048$ to $4096$, the estimated decay exponents are $0.749$ for $\beta=1/2$ and $0.707$ for $\beta=1$, with estimated 95\% paired-bootstrap intervals $[0.630,0.879]$ and $[0.575,0.839]$, respectively.
These intervals are obtained by resampling the 50 replications with
replacement, using the same sampled indices at both sample sizes, and
recalculating the decay exponent \citep{efron1985bootstrap}. They quantify simulation uncertainty
while preserving the dependence between errors calculated from the same
underlying data stream; Appendix~\ref{app:simulation-adaptive-risk}
gives details.}

Posterior samples obtained under Gaussian network parameters and
geometric width are reweighted to estimate the posterior mean under fixed
shifted Poisson and power-law width priors satisfying \eqref{eq:width},
conditional on $\mathsf K\leq64$. Each case uses 50 replications whose fits pass the diagnostics
in Appendix~\ref{app:simulation-computation}.

\Needspace{8\baselineskip}
\section{Discussion}

The main conclusions depend on two local probability conditions:
one on the width law and one on the conditional network-parameter law. A single
prior satisfying them gives the stated rate for every H\"older
smoothness and radius under consideration. Uniform, Gaussian, and
Student network-parameter priors are distinct examples of this principle, rather than
separate mechanisms. Data-dependent clipping then gives the same
posterior-mean risk rate without a known target bound.

\subsection{The logarithmic gap}\label{sec:log-gap}

The local prior-mass bound \eqref{eq:smallball} has a
$\log(1/\varepsilon)$ factor, while the minimax lower bound has
no logarithm. Requiring the network parameters to lie in narrow intervals
around one approximator contributes this factor, and the width law
can contribute a cost of the same order. These estimates do not
show that the logarithmic loss is necessary.

A uniform bound
$-\log\Pi\{f:\norm{f-f_0}_\infty\leq\varepsilon\}
\lesssim\varepsilon^{-d/\beta}$ would give the exact minimax rate
through Lemma~\ref{lem:frac}. A heavier-tailed width law alone does
not remove the interval-probability cost. Possible improvements
include collecting the probability of many accurate representations,
using dependence between network parameters, and working with likelihood or
prediction neighbourhoods rather than a single uniform-approximation
box. The sufficient condition \eqref{eq:weight-mass} is designed for
transparent verification, not as a characterization of all priors
that attain the rate.

\subsection{Deep networks and other approximations}

Using shallow networks keeps our approach simple, with explicit
architecture and perturbation estimates. The method can also be
considered for deep networks. For deep
networks, the corresponding problem is to specify one prior on
finite architectures and, conditional on each architecture, assign
enough mass near accurate parameter configurations. The relevant
approximation and stability bounds must track depth and parameter
count, as well as coefficient size. Combining such bounds with a
fixed architecture law would extend the same principle without
choosing a new prior for each sample size or smoothness class.
The resulting rates must be established from those bounds; they do
not follow merely by replacing width with total parameter count.

The prior-mass principle is also not specific to neural networks.
Lemma~\ref{lem:frac} applies to any prior on $\Ccal(\Xcal)$ in the
Gaussian regression model. Finite series, spline expansions, and
other finite-dimensional approximation families offer settings in
which to formulate analogous conditions on model size and local
parameter probabilities. The objective is to identify verifiable
conditions on the approximation family and prior that guarantee
enough mass across all relevant scales. Outside Gaussian regression,
the corresponding likelihood-neighbourhood and loss-transfer
arguments would also need to be established; the clipping result
here is a separate regression-specific step.

\subsection{Further questions}

Our results concern a fractional posterior with $0<\eta<1$.
Its expected fractional likelihood ratio decays exponentially in R\'enyi
divergence. At $\eta=1$ the expected likelihood ratio is one, so
this particular argument gives no contraction bound. An
ordinary-posterior result would require additional control of
inaccurate networks, including very large architectures; compare
\citet{kongkim2025}. The local conditions in this paper do not
supply that global control.

We treat H\"older smoothness up to one. For
$1<\beta<(d+3)/2$, the shallow approximation bounds of
\citet{yangzhou2025} offer a starting point, with smoothness defined
through derivatives and an appropriate extension beyond the cube.
The clipping construction could then use the resulting Lipschitz
bound. Deep-network results for smoother classes are available in
\citet{kongkim2025}.

Adaptive clipping removes the need to know the target range, but
at small sample sizes its interval may fail to cover the whole
range. A lower design-density bound gives eventual exponential
control of this event. Remark~\ref{rem:arbitrary-design} retains
the same risk rate without that assumption, at the cost of an
additional polynomial term in the posterior-tail bound. The present
analysis does not determine the risk of the unclipped posterior
mean, when it exists, or frequentist coverage of posterior credible balls.

The design law can permit faster rates than the uniform
H\"older-ball upper bound. When the covariates equal one point,
only one function value needs to be estimated and squared risk has
order $n^{-1}$. Rates for designs concentrated on lower-dimensional
sets require geometric information about the support;
\citet[Section~3.3]{castilloegels2025} study deep-network rates in
terms of its upper Minkowski dimension.

The numerical width cap excludes larger networks, while finite
posterior sampling adds computational error. Bounds keeping these
errors below the statistical risk would connect the theory more
directly to implementation. Cauchy priors also motivate samplers that
explore large coefficients reliably. Extending the variational
analysis of \citet{castilloegels2025} to a fixed random-architecture
prior is another route to studying computational approximations.

\subsection*{AI Use Statement}
OpenAI ChatGPT and Codex were used for exposition, mathematical
formulation, proof checking, proofreading, and formatting.
The authors take responsibility for all parts of this manuscript.

% References are integrated to make this source independent of BibTeX.

\clearpage
\appendix
% Appendices are integrated into the article.
\section{Approximation theory}\label{app:approx}

\begin{lemma}[H\"older extension]\label{lem:extension}
Let $A$ be a nonempty subset of a metric space $(S,d_S)$ and let
$0<\beta\leq1$. If $h:A\to\mathbb R$ satisfies
$\abs{h(a)-h(a')}\leq H d_S(a,a')^\beta$ for all $a,a'\in A$
and some $H>0$, then
\begin{equation}\label{eq:extension}
\widetilde h(z)=\inf_{a\in A}\{h(a)+H d_S(z,a)^\beta\},
\qquad z\in S,
\end{equation}
is a finite extension of $h$ with the same H\"older constant. If $h$ is
bounded by $M$, clipping $\widetilde h$ to $[-M,M]$ preserves both the
extension and its H\"older bound.
\end{lemma}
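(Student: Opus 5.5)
This is the McShane–Whitney construction, and I will carry out the standard argument in four steps: finiteness, agreement on $A$, the H\"older bound, and clipping.

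\emph{Finiteness.} Fix $a_0\in A$. For every $a\in A$ and $z\in S$, the H\"older condition and the triangle inequality give
\[
h(a)+Hd_S(z,a)^\beta\ge h(a_0)-Hd_S(a,a_0)^\beta+Hd_S(z,a)^\beta\ge h(a_0)-Hd_S(z,a_0)^\beta .
\]
Here I use that $t\mapsto t^\beta$ is subadditive on $[0,\infty)$ for $0<\beta\le1$, so $d_S(a,a_0)^\beta\le d_S(a,z)^\beta+d_S(z,a_0)^\beta$. Hence the infimum is bounded below. Taking $a=a_0$ shows it is also bounded above by $h(a_0)+Hd_S(z,a_0)^\beta$, so $\widetilde h(z)$ is finite.

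\emph{Extension.} For $z\in A$, choosing $a=z$ gives $\widetilde h(z)\le h(z)$. For any $a\in A$, the H\"older condition gives $h(a)+Hd_S(z,a)^\beta\ge h(z)$, so $\widetilde h(z)\ge h(z)$. Thus $\widetilde h=h$ on $A$.

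\emph{H\"older bound.} Fix $z,z'\in S$ and any $a\in A$. By subadditivity of $t^\beta$ and the triangle inequality,
\[
\widetilde h(z)\le h(a)+Hd_S(z,a)^\beta\le h(a)+Hd_S(z',a)^\beta+Hd_S(z,z')^\beta .
\]
Taking the infimum over $a$ yields $\widetilde h(z)-\widetilde h(z')\le Hd_S(z,z')^\beta$. Exchanging the roles of $z$ and $z'$ gives $\abs{\widetilde h(z)-\widetilde h(z')}\le Hd_S(z,z')^\beta$. The only point needing care anywhere in the proof is this subadditivity, $(s+t)^\beta\le s^\beta+t^\beta$ for $s,t\ge0$. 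It follows from concavity of $t\mapsto t^\beta$ together with the value $0$ at $t=0$.

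\emph{Clipping.} The map $t\mapsto(-M)\vee(t\wedge M)$ is $1$-Lipschitz, so composing it with $\widetilde h$ keeps the H\"older constant $H$. Since $\abs{h}\le M$ on $A$, the clipping map fixes $h$ there, so the clipped function still extends $h$. It is also bounded by $M$.
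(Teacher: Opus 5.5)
Your proposal is correct and follows essentially the same route as the paper: the McShane--Whitney infimum, finiteness via a fixed $a_0\in A$ and subadditivity of $t\mapsto t^\beta$, agreement on $A$, the H\"older bound by taking infima and swapping $z,z'$, and clipping by the $1$-Lipschitz projection onto $[-M,M]$. Your justification of subadditivity from concavity and the value $0$ at $t=0$ is a small detail the paper leaves implicit.
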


\begingroup
\begin{proof}
For $z,z'\in S$ and $a\in A$, subadditivity of
$t\mapsto t^\beta$ gives
\[
d_S(z,a)^\beta\leq d_S(z,z')^\beta+d_S(z',a)^\beta.
\]
We first check that the infimum in \eqref{eq:extension} is finite.
Fix $a_0\in A$. The H\"older condition and the same subadditivity give,
for every $a\in A$,
\[
h(a)+H d_S(z,a)^\beta
\geq h(a_0)-H d_S(z,a_0)^\beta.
\]
This bounds the infimum below; taking $a=a_0$ bounds it above.

If $z\in A$, the choice $a=z$ gives $\widetilde h(z)\leq h(z)$.
The H\"older condition gives $h(a)+H d_S(z,a)^\beta\geq h(z)$
for every $a\in A$, proving equality. For arbitrary $z,z'\in S$,
subadditivity gives
\[
h(a)+H d_S(z,a)^\beta
\leq h(a)+H d_S(z',a)^\beta+H d_S(z,z')^\beta.
\]
Taking infima and then interchanging $z,z'$ proves the asserted
H\"older bound. Finally, projection onto $[-M,M]$ is one-Lipschitz
and fixes every value of $h$, so clipping preserves both conclusions.
\end{proof}
\endgroup

\begingroup
\begin{proof}[Proof of Proposition~\ref{prop:approx}]
We first place the target on the unit ball used by the approximation
result, then rescale the approximating network back to $\Xcal$.
Let $\mathbb B^d$ be the closed Euclidean unit ball in $\mathbb R^d$
and $\mathbb S^d$ the Euclidean unit sphere in $\mathbb R^{d+1}$.
Set $R=\max\{1,\sqrt d\}$ and
$\Xcal_R=R^{-1}\Xcal\subseteq\mathbb B^d$.
The function $h(z)=f_0(Rz)$ on $\Xcal_R$ satisfies
$\norm h_\infty\leq M$ and
\[
\abs{h(z)-h(z')}\leq MR^\beta\norm{z-z'}_2^\beta.
\]
Lemma~\ref{lem:extension}, followed by clipping to $[-M,M]$,
extends $h$ to $\mathbb R^d$ with these same bounds.
There is consequently a fixed $K_0\geq1$, depending only on
$d,\beta,M$, such that the restriction of $h/K_0$ to $\mathbb B^d$
belongs to the unit H\"older ball in
\citet[Cor.~2.4(3)]{yangzhou2025}.

For every positive integer $N$ and bound $V>0$ on the sum of absolute output coefficients, that
corollary supplies a network
\[
g_0(z)=\sum_{m=1}^N c_m\rho((z,1)^\top v_m),
\qquad v_m\in\mathbb S^d,\qquad \sum_{m=1}^N\abs{c_m}\leq V,
\]
with supremum error relative to $h/K_0$ on $\mathbb B^d$ at most
\[
C\left\{N^{-\beta/d}\vee
V^{-2\beta/(d+3-2\beta)}\right\}.
\]
Its smoothness condition holds because
$0<\beta\leq1<(d+3)/2$. Take $N=k$ and
$V=k^{\omega-\beta/d}$. Since
\[
\frac{2\beta(\omega-\beta/d)}{d+3-2\beta}=\frac\beta d,
\]
both terms in the error bound have order $k^{-\beta/d}$.

Write $v_m=(w_m,t_m)\in\mathbb R^d\times\mathbb R$ and set
\[
a_m^*=K_0c_m,\qquad u_m^*=w_m/R,\qquad b_m^*=t_m.
\]
Then $f_{\theta_k^*}(x)=K_0g_0(x/R)$ has supremum error at most
$Ck^{-\beta/d}$ on $\Xcal$ and a sum of absolute output coefficients at
most $K_0k^{\omega-\beta/d}$. Units with zero parameters may be
added if the representation uses fewer than $k$ units.
The sphere constraint and $R\geq1$ imply
$\norm{u_m^*}_\infty\leq1$ and $\abs{b_m^*}\leq1$.
Every constant used above is uniform over the stated H\"older ball.
\end{proof}
\endgroup

\section{Prior probability of approximating networks}\label{app:mass}

Fix $0<\beta\leq1$ and $M>0$, and set
\begin{equation}\label{eq:exponents}
p=d/\beta,\qquad \kappa=(d+3-2\beta)/(2\beta).
\end{equation}
Choose $C_N$ sufficiently large and put
$k_\varepsilon=\lceil C_N\varepsilon^{-p}\rceil$.
Proposition~\ref{prop:approx} gives an approximating network
$f_\varepsilon=f_{\theta_{k_\varepsilon}^*}$ with error at most
$\varepsilon/4$ and sum of absolute output coefficients
$V_\varepsilon=\sum_{m=1}^{k_\varepsilon}|a_m^*|
\lesssim\varepsilon^{-\kappa}$.
For a fixed $c_\delta>0$ to be chosen below, define
\begin{equation}\label{eq:delta}
\delta_\varepsilon=c_\delta\varepsilon/
(k_\varepsilon+1+V_\varepsilon),
\end{equation}
and the parameter box
\begin{equation}\label{eq:event}
\mathcal E_\varepsilon=
\{\theta\in\mathbb R^{(d+2)k_\varepsilon}:
\norm{\theta-\theta_{k_\varepsilon}^*}_\infty
\leq\delta_\varepsilon\}.
\end{equation}
All constants below are uniform over the target H\"older ball.

\begin{lemma}[Box stability]\label{lem:stability}
For sufficiently small fixed $c_\delta>0$ and all sufficiently
small $\varepsilon$, every parameter vector in
$\mathcal E_\varepsilon$ represents a function $f$ satisfying
$\norm{f-f_\varepsilon}_\infty\leq\varepsilon/4$ and
$\norm{f-f_0}_\infty\leq\varepsilon/2$. Moreover,
$\log(1/\delta_\varepsilon)\lesssim\log(1/\varepsilon)$.
\end{lemma}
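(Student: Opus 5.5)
The plan is a direct perturbation estimate for the network map $\theta\mapsto f_\theta$ on $\Xcal$, unit by unit, followed by the triangle inequality. Fix $\theta\in\mathcal E_\varepsilon$ and write $\theta=((a_m,u_m,b_m))_{m=1}^{k_\varepsilon}$, with centre $\theta_{k_\varepsilon}^*=((a_m^*,u_m^*,b_m^*))$. For each unit, split the difference as
\[
a_m\rho(u_m^\top x+b_m)-a_m^*\rho(u_m^{*\top}x+b_m^*)
=(a_m-a_m^*)\rho(u_m^\top x+b_m)
+a_m^*\bigl\{\rho(u_m^\top x+b_m)-\rho(u_m^{*\top}x+b_m^*)\bigr\}.
\]
For $x\in[0,1]^d$ we have $\abs{u_m^\top x+b_m}\leq(d+1)(1+\delta_\varepsilon)\leq 2(d+1)$, using \eqref{eq:approx-2} and $\delta_\varepsilon\leq1$. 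So the first term is at most $2(d+1)\delta_\varepsilon$. The ReLU is $1$-Lipschitz, so the second term is at most $\abs{a_m^*}\,(\norm{u_m-u_m^*}_1+\abs{b_m-b_m^*})\leq (d+1)\abs{a_m^*}\delta_\varepsilon$.

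Summing over $m$ gives
\[
\norm{f_\theta-f_\varepsilon}_\infty\leq 2(d+1)\delta_\varepsilon\,(k_\varepsilon+V_\varepsilon)
\leq 2(d+1)c_\delta\,\varepsilon
\]
by the definition \eqref{eq:delta}. Choosing $c_\delta\leq 1/\{8(d+1)\}$ makes this at most $\varepsilon/4$. The triangle inequality with $\norm{f_\varepsilon-f_0}_\infty\leq\varepsilon/4$ then yields $\norm{f_\theta-f_0}_\infty\leq\varepsilon/2$. The approximation error $\varepsilon/4$ comes from Proposition~\ref{prop:approx} with $C_N$ large: $Ck_\varepsilon^{-\beta/d}\leq CC_N^{-\beta/d}\varepsilon\leq\varepsilon/4$.

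For the logarithmic bound, $k_\varepsilon\lesssim\varepsilon^{-p}$ and $V_\varepsilon\lesssim\varepsilon^{-\kappa}$ for small $\varepsilon$. The latter holds by \eqref{eq:approx-3}, since $k^{\omega-\beta/d}$ with $k\asymp\varepsilon^{-d/\beta}$ is $\varepsilon^{-(d\omega/\beta-1)}=\varepsilon^{-\kappa}$. Hence $k_\varepsilon+1+V_\varepsilon\lesssim\varepsilon^{-\max(p,\kappa)}$, and $\log(1/\delta_\varepsilon)\leq\log(1/c_\delta)+(1+\max(p,\kappa))\log(1/\varepsilon)+O(1)\lesssim\log(1/\varepsilon)$ once $\varepsilon<\varepsilon_0<1$.

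I expect no step to be a real obstacle. The only care needed is to keep every constant, namely $C$ from Proposition~\ref{prop:approx}, $C_N$, and $c_\delta$, independent of $f_0$ within $\Hcal_d^\beta(M)$, and to check that $\delta_\varepsilon\leq1/2$ for small $\varepsilon$ so that later use in \eqref{eq:weight-mass} is legitimate.
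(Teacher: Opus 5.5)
Your proposal is correct and matches the paper's proof. It uses the same per-unit add-and-subtract split, the bounds $2(d+1)\delta_\varepsilon$ and $(d+1)|a_m^*|\delta_\varepsilon$, the choice $c_\delta\le 1/\{8(d+1)\}$, and the polynomial lower bound $\delta_\varepsilon\gtrsim\varepsilon^{1+\max\{p,\kappa\}}$ for the logarithmic claim.
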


\begingroup
\begin{proof}[Proof of Lemma~\ref{lem:stability}]
Put $k=k_\varepsilon$ and $\delta=\delta_\varepsilon$.
Fix $\theta=((a_m,u_m,b_m))_{m=1}^k\in\mathcal E_\varepsilon$
and let $f$ be its network. Put $D=d+1$ and abbreviate
$V=V_\varepsilon$. For each $x\in\Xcal$,
\[
\abs{(u_m-u_m^*)^\top x+b_m-b_m^*}\leq D\delta.
\]
For sufficiently small $\varepsilon$, we have $\delta\leq1$.
Since the approximating {hidden-layer coefficients and biases} are bounded by one,
$\abs{\rho(u_m^\top x+b_m)}\leq2D$.
Subtract and add $a_m^*\rho(u_m^\top x+b_m)$ to separate
the perturbation of the output coefficient from that of the hidden-layer
coefficients and bias. ReLU is
one-Lipschitz, so
\begin{align*}
&\abs{a_m\rho(u_m^\top x+b_m)
-a_m^*\rho((u_m^*)^\top x+b_m^*)}\\
&\qquad\leq
\abs{a_m-a_m^*}\abs{\rho(u_m^\top x+b_m)}
+\abs{a_m^*}D\delta
\leq2D\delta+D\abs{a_m^*}\delta.
\end{align*}
Summing these inequalities gives
\[
\norm{f-f_\varepsilon}_\infty
\leq D\delta(2k+V)
\leq2Dc_\delta\varepsilon.
\]
Taking $c_\delta\leq(8D)^{-1}$ proves the first error bound.
Proposition~\ref{prop:approx} and the triangle inequality give
$\norm{f-f_0}_\infty\leq\varepsilon/2$.

Finally, $k+1+V\lesssim\varepsilon^{-\max\{p,\kappa\}}$.
Thus $\delta\gtrsim\varepsilon^{1+\max\{p,\kappa\}}$, and taking
logarithms proves the remaining assertion. These estimates and
choices of constants are uniform over the target class.
\end{proof}
\endgroup

For sufficiently small $\varepsilon$, the centre is admissible in
\eqref{eq:weight-centres}, because
$V_\varepsilon\leq Ck_\varepsilon^{\omega-\beta/d}
\leq k_\varepsilon^\omega/2$. Also
$\delta_\varepsilon\leq1/2$. Condition~\eqref{eq:weight-mass}
therefore gives
\begin{equation}\label{eq:mass-bound}
-\log\Pi_{k_\varepsilon}^{\theta}(\mathcal E_\varepsilon)
\lesssim k_\varepsilon\log(1/\varepsilon).
\end{equation}

\begingroup
\begin{proof}[Proof of Proposition~\ref{prop:smallball}]
If the sampled width is $k_\varepsilon$ and every network parameter
lies in its interval in $\mathcal E_\varepsilon$, Lemma~\ref{lem:stability}
gives $\norm{F-f_0}_\infty\leq\varepsilon/2$ for the resulting
network $F$. Thus
$\{\mathsf K=k_\varepsilon,\theta_{k_\varepsilon}\in\mathcal E_\varepsilon\}$
is contained in $\{\norm{F-f_0}_\infty\leq\varepsilon\}$.
The probability of choosing this width and these parameter values is
$\pi_{\mathsf K}(k_\varepsilon)
\Pi_{k_\varepsilon}^{\theta}(\mathcal E_\varepsilon)$. Therefore
\begin{align*}
-\log\Pi\{f:\norm{f-f_0}_\infty\leq\varepsilon\}
&\leq-\log\pi_{\mathsf K}(k_\varepsilon)
-\log\Pi_{k_\varepsilon}^{\theta}(\mathcal E_\varepsilon)\\
&\lesssim k_\varepsilon\log(1/\varepsilon)
\lesssim\varepsilon^{-d/\beta}\log(1/\varepsilon).
\end{align*}
The second inequality uses \eqref{eq:width} and
\eqref{eq:mass-bound}; the last uses the chosen width
$k_\varepsilon\asymp\varepsilon^{-d/\beta}$.
All constants are uniform over $f_0\in\Hcal_d^\beta(M)$.
\end{proof}
\endgroup

\section{Fractional posterior and clipped loss}\label{app:posterior}

{The clipping level $\widetilde B_n$ is a finite measurable
function of the data. The map $(b,f)\mapsto C_b\circ f$ is continuous
for $b>0$ and $f\in\Ccal(\Xcal)$, so the clipped posterior
$\widetilde\Pi_n^\eta$ is a measurable probability kernel.
For each data set, the integrand in \eqref{eq:adaptive-mean} is
bounded by $\widetilde B_n$. Dominated convergence therefore makes
$\widetilde f_n$ continuous in $x$. Its values at a countable dense
subset of $\Xcal$ are measurable functions of the data, which
makes $\widetilde f_n$ a measurable $\Ccal(\Xcal)$-valued estimator.}

\paragraph{Existence of the unclipped posterior mean.}
\label{app:unclipped-mean}
Take $d=1$, $P_X=\Unif[0,1]$, $f_0=0$, and independent
Student $t_\nu$ network parameters with $0<\nu\leq1$, including
Cauchy network parameters at $\nu=1$. Condition~\eqref{eq:width} gives
$\pi_{\mathsf K}(1)>0$. Fix a realized data set with
$m=\max_{i\leq n}X_i<1$ and finite responses; this holds almost
surely at every finite sample size. Set
$t_0=(m+1)/2$, $I=(t_0,1]$, and
\[
\mathcal H_m=\{1<U_1<2,\ m<-B_1/U_1<t_0\}.
\]
This event has positive prior probability conditional on
$\mathsf K=1$. On $\{\mathsf K=1\}\cap\mathcal H_m$,
all training activations vanish, whereas
$\rho(U_1x+B_1)\geq x-t_0>0$ for $x\in I$.
The fractional likelihood is therefore
$L_\eta=\exp\{-\eta\sum_{i=1}^nY_i^2/2\}>0$, independently of $A_1$.
Let $Z_\eta\in(0,1]$ be the normalizing constant in
\eqref{eq:posterior}, and write $z_+=\max\{z,0\}$ and
$z_-=\max\{-z,0\}$. The Student density has tails of order
$|a|^{-\nu-1}$, so both
$\E[(A_1)_\pm\mid\mathsf K=1]$ are infinite. Independence and
Tonelli's theorem give, for every $x\in I$,
\begin{align*}
\int_{\Ccal(\Xcal)} f(x)_\pm\,\Pi_n^\eta(\mathrm d f\mid\Dcal_n)
&\geq \frac{\pi_{\mathsf K}(1)L_\eta}{Z_\eta}
(x-t_0)\PP(\mathcal H_m\mid\mathsf K=1)\\
&\qquad{}\times\E[(A_1)_\pm\mid\mathsf K=1]=\infty.
\end{align*}
Thus the unclipped posterior mean is undefined on an interval of
positive $P_X$-measure. This concerns the mean's existence, rather
than posterior contraction in probability. The clipped averages
remain well defined without any moment condition on the network parameters.

\begin{lemma}[Clipping with target-range bias]\label{lem:clipping}
For any $b>0$, $f,f_0\in\Ccal(\Xcal)$, and $0<\eta<1$,
\begin{equation}\label{eq:adaptive-clipping-lemma}
\norm{C_b\circ f-f_0}_{\Lcal^2(P_X)}^2
\leq 2K_\eta(b)D_\eta(f,f_0)
+2\norm{C_b\circ f_0-f_0}_{\Lcal^2(P_X)}^2,
\end{equation}
where $K_\eta(b)=4(1-\eta)b^2/
\{1-e^{-2\eta(1-\eta)b^2}\}$.
If $b\geq\norm{f_0}_\infty$, the second term vanishes and the
factor $2$ in front of the first term can be omitted.
\end{lemma}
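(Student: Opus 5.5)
The plan is to separate the clipping bias from the clipped discrepancy, and then turn the bounded clipped discrepancy into R\'enyi divergence by a convexity argument.

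\emph{Step 1: split off the bias.} Write $C_b\circ f-f_0=(C_b\circ f-C_b\circ f_0)+(C_b\circ f_0-f_0)$ and use $(s+t)^2\leq2s^2+2t^2$ pointwise. This gives
\[
\norm{C_b\circ f-f_0}_{\Lcal^2(P_X)}^2\leq2\E_X\Delta(X)+2\norm{C_b\circ f_0-f_0}_{\Lcal^2(P_X)}^2,
\qquad \Delta=(C_b\circ f-C_b\circ f_0)^2.
\]
If $b\geq\norm{f_0}_\infty$, then $C_b\circ f_0=f_0$. In that case the decomposition is an identity, no factor $2$ is needed, and the bias term vanishes. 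So it suffices to prove $\E_X\Delta\leq K_\eta(b)D_\eta(f,f_0)$.

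\emph{Step 2: two pointwise facts about $\Delta$.} The map $C_b$ is one-Lipschitz and takes values in $[-b,b]$. Hence, pointwise,
\[
\Delta\leq(f-f_0)^2 \qquad\text{and}\qquad \Delta\leq T:=4b^2.
\]

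\emph{Step 3: the chord inequality (the key step).} Put $\lambda=\eta(1-\eta)/2$. The function $t\mapsto1-e^{-\lambda t}$ is concave and vanishes at $0$, so it lies above its chord on $[0,T]$:
\[
1-e^{-\lambda t}\geq\frac{t}{T}\,(1-e^{-\lambda T}),\qquad 0\leq t\leq T.
\]
Apply this with $t=\Delta(X)$, which is allowed by the bound $\Delta\leq T$. Then use monotonicity of $t\mapsto1-e^{-\lambda t}$ together with $\Delta\leq(f-f_0)^2$. Taking expectations yields
\[
\E_X\Delta\leq\frac{T}{1-e^{-\lambda T}}\,\E_X\bigl\{1-e^{-\lambda(f-f_0)^2}\bigr\}.
\]

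\emph{Step 4: identify the R\'enyi divergence.} By \eqref{eq:renyi},
\[
\E_X e^{-\lambda(f-f_0)^2}=e^{-(1-\eta)D_\eta(f,f_0)}.
\]
Using $1-e^{-u}\leq u$, the last expectation is therefore at most $(1-\eta)D_\eta(f,f_0)$. Since $\lambda T=2\eta(1-\eta)b^2$, we get
\[
\E_X\Delta\leq\frac{4(1-\eta)b^2}{1-e^{-2\eta(1-\eta)b^2}}\,D_\eta(f,f_0)=K_\eta(b)\,D_\eta(f,f_0).
\]
This is exactly the required bound.

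The only delicate point is Step 3. The chord bound needs a uniform bound on the argument, and that bound holds for the clipped difference $\Delta$ but not for the unclipped $(f-f_0)^2$. This is why the clipping of $f_0$ must be introduced first, in Step 1. Monotonicity then transfers the bound from $\Delta$ back to $(f-f_0)^2$, which is the quantity the R\'enyi divergence sees.
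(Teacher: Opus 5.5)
Your proposal is correct and follows the paper's proof essentially step for step: the same split with $(u+v)^2\leq 2u^2+2v^2$, the same pointwise bound $\Delta\leq\min\{(f-f_0)^2,4b^2\}$, the same chord inequality for the concave map $t\mapsto 1-e^{-\lambda t}$ combined with its monotonicity, and the same use of $1-e^{-u}\leq u$ (the paper writes this as $-\log z\geq 1-z$) to reach $D_\eta$. The only cosmetic difference is that you apply the chord bound at $\Delta$ and then use monotonicity, whereas the paper applies it to $\min\{(f-f_0)^2,4b^2\}$ and uses monotonicity for values at least $4b^2$.
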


\begingroup
\begin{proof}
Set $a=\eta(1-\eta)/2$ and $T=(f(X)-f_0(X))^2$.
Clipping is one-Lipschitz, and both clipped values lie in $[-b,b]$.
Hence
\[
|C_b(f(X))-C_b(f_0(X))|^2\leq\min\{T,4b^2\}.
\]
On $[0,4b^2]$, the concave function $t\mapsto1-e^{-at}$ lies
above the chord joining its endpoints. For $t\geq4b^2$ it is
increasing. Combining these two facts gives
\begin{equation}\label{eq:clipping-chord}
1-e^{-aT}\geq
\frac{1-e^{-4ab^2}}{4b^2}\min\{T,4b^2\}.
\end{equation}
By \eqref{eq:renyi} and $-\log z\geq1-z$,
\[
(1-\eta)D_\eta(f,f_0)
\geq\E_X(1-e^{-aT}).
\]
Taking expectations in \eqref{eq:clipping-chord} therefore yields
\[
\norm{C_b\circ f-C_b\circ f_0}_{\Lcal^2(P_X)}^2
\leq K_\eta(b)D_\eta(f,f_0).
\]
Now write $C_b\circ f-f_0$ as the sum of
$C_b\circ f-C_b\circ f_0$ and $C_b\circ f_0-f_0$.
The inequality $(u+v)^2\leq2u^2+2v^2$, integrated over $P_X$,
proves \eqref{eq:adaptive-clipping-lemma}. If $b\geq\norm{f_0}_\infty$,
then $C_b\circ f_0=f_0$, so this final step and its factor two are
unnecessary.
\end{proof}
\endgroup

\begingroup
\begin{proof}[Proof of Theorem~\ref{thm:main}]
The prior-probability bound \eqref{eq:smallball} is uniform over
$\Hcal_d^\beta(M)$. For $\varepsilon_n$ in \eqref{eq:rate},
\[
\varepsilon_n^{-d/\beta}\log(1/\varepsilon_n)
\lesssim A^{-d/\beta}
n^{d/(2\beta+d)}(\log n)^{2\beta/(2\beta+d)},
\]
whereas
$n\varepsilon_n^2=A^2n^{d/(2\beta+d)}
(\log n)^{2\beta/(2\beta+d)}$.
Choose the fixed $A$ large enough that the prior probability is at
least $e^{-n\varepsilon_n^2}$ for all sufficiently large $n$.
Since $n\varepsilon_n^2\to\infty$, Lemma~\ref{lem:frac} applies with
$c_0=1$. Choose $J_0>(1+\eta)/(1-\eta)$ and put
\[
C=J_0,\qquad
c=\min\{1/8,(1-\eta)J_0-1-\eta\}>0.
\]
The two terms in that lemma are at most $e^{-cn\varepsilon_n^2}$
each, proving \eqref{eq:renyi-result}.

Write $p_j=P_X(\mathcal Q_{j,n})$. Equal cell volumes and the lower
density bound imply $p_j\geq c_X/J_n$.
Let $\mathcal E_n$ be the event that every cell contains at least
$c_Xn/(2J_n)$ observations. The binomial Chernoff bound and a union
bound over cells give
\[
\PP(\mathcal E_n^c)\leq J_n\exp\{-c_Xn/(8J_n)\}.
\]
Let $\overline\xi_{j,n}$ denote the noise mean in cell $j$, with
value zero if the cell is empty. Conditional on the covariates,
each occupied-cell noise mean has law $\mathrm N(0,1/N_{j,n})$.
Consequently, on $\mathcal E_n$,
\[
\PP\{\max_j|\overline\xi_{j,n}|>1\mid X_1,\ldots,X_n\}
\leq2J_n\exp\{-c_Xn/(4J_n)\}.
\]
Define $\mathcal G_n=\mathcal E_n\cap
\{\max_j|\overline\xi_{j,n}|\leq1\}$. Combining the two failure
probabilities gives
\begin{equation}\label{eq:coarse-good-bound}
\PP_{f_0}(\mathcal G_n^c)
\leq3J_n\exp\{-c_Xn/(8J_n)\}.
\end{equation}
This estimate is uniform over the target class. Since
$J_n\asymp(\log n)^d$,
\[
\frac{n/J_n}{n\varepsilon_n^2}
\asymp A^{-2}n^{2\beta/(2\beta+d)}
(\log n)^{-d-2\beta/(2\beta+d)}\longrightarrow\infty.
\]
Also $\log J_n=o(n/J_n)$. Thus the right side of
\eqref{eq:coarse-good-bound} is eventually at most
$e^{-cn\varepsilon_n^2}$ for the same fixed $c$ chosen above.

For large enough $n$, $Mm_n^{-\beta}\leq1$.
On $\mathcal G_n$, every cell is occupied. Its max-norm diameter is
$1/m_n$, so for $x\in\mathcal Q_{j,n}$, averaging the H\"older
inequality over the sampled covariates in that cell gives
\begin{equation}\label{eq:coarse-range}
|f_0(x)|\leq|\overline Y_{j,n}|+
|\overline\xi_{j,n}|+Mm_n^{-\beta}
\leq\widetilde B_n.
\end{equation}
In addition, $|\overline Y_{j,n}|\leq M+1$ on this event, so
$\widetilde B_n\leq M+3$.

By \eqref{eq:coarse-range}, clipping leaves the target unchanged on
$\mathcal G_n$. The final assertion of Lemma~\ref{lem:clipping} gives
\begin{equation}\label{eq:adaptive-loss-transfer}
\norm{C_{\widetilde B_n}\circ f-f_0}_{\Lcal^2(P_X)}^2
\leq K_\eta(M+3)D_\eta(f,f_0)
\quad\text{on }\mathcal G_n.
\end{equation}
Here $K_\eta(b)$ is increasing in $b>0$, as follows from the
monotonicity of $t/(1-e^{-t})$ for $t>0$.
Take $C^\prime=\max\{C,CK_\eta(M+3),3\}$ and $c^\prime=c$.
On $\mathcal G_n$, clipped loss exceeding $C^\prime\varepsilon_n^2$
implies $D_\eta(f,f_0)>C\varepsilon_n^2$.
The expected posterior probability is therefore bounded by the
R\'enyi tail plus $\PP_{f_0}(\mathcal G_n^c)$, hence by
$3e^{-cn\varepsilon_n^2}$.
Our choice $C^\prime\geq3$ proves \eqref{eq:l2-result}.
\end{proof}
\endgroup

If a fixed $B\geq M$ is prescribed, Lemma~\ref{lem:clipping}
gives $\norm{C_B\circ f-f_0}_{\Lcal^2(P_X)}^2
\leq K_\eta(B)D_\eta(f,f_0)$. The R\'enyi bound then gives a pure
exponential posterior tail at a suitable constant multiple of
$\varepsilon_n^2$. This comparison does not require estimating $B$.

\subsection{A clipping level for arbitrary designs}
\label{app:arbitrary-design}

\begingroup
For the alternative in Remark~\ref{rem:arbitrary-design}, set
$s_n=\max\{1,\lfloor n^{1/(4d)}\rfloor\}$.
Divide each coordinate interval into $s_n$ equal intervals and take
their Cartesian products. Use left-closed, right-open intervals,
except the last interval, which includes one. This gives a disjoint
partition of $\Xcal$ into $H_n=s_n^d\leq n^{1/4}$ cubes
$\mathcal Q_{j,n}^{\circ}$, each with max-norm diameter $1/s_n$.
Write $N_{j,n}^{\circ}$ and $\overline Y_{j,n}^{\circ}$ for their
counts and response means. Set $t_n=\lceil8\log(en)\rceil$.
A cell is eligible if it contains at least $t_n$ observations.
Define
\endgroup
\begin{equation}\label{eq:sparse-clipping}
\widetilde B_n^{\circ}=2+
\max_{j:N_{j,n}^{\circ}\geq t_n}|\overline Y_{j,n}^{\circ}|,
\qquad
\widetilde f_n^{\circ}(x)=
\int_{\Ccal(\Xcal)} C_{\widetilde B_n^{\circ}}(f(x))\,\Pi_n^\eta(\mathrm d f\mid\Dcal_n),
\end{equation}
with an empty maximum equal to zero. This procedure also uses neither
$\beta$ nor $M$.

\begingroup
\begin{proof}[Proof of Remark~\ref{rem:arbitrary-design}]
Choose the fixed R\'enyi threshold
$J_0>(1+\eta)/(1-\eta)$ as in the proof of Theorem~\ref{thm:main}.
Let $\mathcal G_n^{\circ}$ be the event that every eligible cell has
noise mean of absolute value at most one. Let
\[
\mathcal U_n^{\circ}
=\bigcup_{j:N_{j,n}^{\circ}<t_n}\mathcal Q_{j,n}^{\circ}
\]
be the union of the ineligible cells.

Conditional Gaussian tails and $N_{j,n}^{\circ}\geq t_n$ for
eligible cells give
\begin{equation}\label{eq:good-noise-bound}
\PP_{f_0}\{(\mathcal G_n^{\circ})^c\}
\leq2H_ne^{-t_n/2}\lesssim n^{-15/4}.
\end{equation}
For sufficiently large $n$, $Ms_n^{-\beta}\leq1$.
On $\mathcal G_n^{\circ}$, the clipping level lies in $[2,M+3]$.
Within every eligible cell, the averaging argument used in
\eqref{eq:coarse-range} gives
$|f_0(x)|\leq|\overline Y_{j,n}^{\circ}|+2
\leq\widetilde B_n^{\circ}$.
Thus clipping can change the target only on $\mathcal U_n^{\circ}$.
Since $|f_0|\leq M$, it follows that
\begin{equation}\label{eq:clipping-bias-mass}
\norm{C_{\widetilde B_n^{\circ}}\circ f_0-f_0}_{\Lcal^2(P_X)}^2
\leq M^2P_X(\mathcal U_n^{\circ})
\quad\text{on }\mathcal G_n^{\circ}.
\end{equation}

Write $p_j=P_X(\mathcal Q_{j,n}^{\circ})$.
For $p\in[0,1]$ and an integer $t\geq1$, if $N\sim\mathrm{Bin}(n,p)$ and
$N'\sim\mathrm{Bin}(n+1,p)$,
comparison of the binomial masses at $l$ and $l+1$ gives
\[
p\PP(N<t)=\frac1{n+1}\E\{N'\mathbf1_{\{N'\leq t\}}\}
\leq\frac{t}{n+1}.
\]
Applying this identity to each cell and summing yields, uniformly
over all $P_X$,
\begin{equation}\label{eq:underoccupied-mean}
\E_{f_0}P_X(\mathcal U_n^{\circ})
\leq\frac{H_nt_n}{n+1}
\lesssim n^{-3/4}\log(en)=o(\varepsilon_n^2).
\end{equation}
For the last comparison, $2\beta/(2\beta+d)\leq2/3$ under our
assumptions, whereas $3/4>2/3$.

We also need a tail bound for this random probability mass.
For $u>0$, if $P_X(\mathcal U_n^{\circ})>u$, the subset of ineligible cells has
total probability greater than $u$ and total count below $H_nt_n$.
For each fixed subset $I$, its total count is
$\mathrm{Bin}(n,p_I)$, where $p_I$ is its total probability.
If $p_I>u$ and $nu\geq2H_nt_n$, Chernoff's inequality bounds the
probability that its count is below $H_nt_n$ by $e^{-nu/8}$.
There are at most $2^{H_n}$ subsets, so
\begin{equation}\label{eq:underoccupied-tail}
\PP_{f_0}\{P_X(\mathcal U_n^{\circ})>u\}
\leq2^{H_n}e^{-nu/8}.
\end{equation}
Since $d/(2\beta+d)\geq1/3>1/4$, we have
$n\varepsilon_n^2\gg H_nt_n+H_n$.
Taking $u=a_0\varepsilon_n^2$ with any fixed $a_0>0$ therefore
makes \eqref{eq:underoccupied-tail} at most
$e^{-cn\varepsilon_n^2}$ for sufficiently large $n$.

On $\mathcal G_n^{\circ}\cap
\{P_X(\mathcal U_n^{\circ})\leq a_0\varepsilon_n^2\}$,
Lemma~\ref{lem:clipping} implies that
$D_\eta(f,f_0)\leq J_0\varepsilon_n^2$ entails
\[
\norm{C_{\widetilde B_n^{\circ}}\circ f-f_0}_{\Lcal^2(P_X)}^2
\leq\{2K_\eta(M+3)J_0+2M^2a_0\}\varepsilon_n^2.
\]
Lemma~\ref{lem:frac}, \eqref{eq:good-noise-bound}, and
\eqref{eq:underoccupied-tail} prove \eqref{eq:designfree-l2}.

For expected risk, conditional Jensen's inequality bounds the loss
of the posterior mean by the posterior mean of the clipped losses.
On $\mathcal G_n^{\circ}$, split that integral at
$D_\eta=J_0\varepsilon_n^2$.
The resulting bound is
\[
2K_\eta(M+3)J_0\varepsilon_n^2
+2M^2P_X(\mathcal U_n^{\circ})
+(2M+3)^2\Pi_n^\eta
\{D_\eta(f,f_0)>J_0\varepsilon_n^2\mid\Dcal_n\}.
\]
Its expectation is $O(\varepsilon_n^2)$ by
\eqref{eq:underoccupied-mean} and Lemma~\ref{lem:frac}.
On $(\mathcal G_n^{\circ})^c$,
$\widetilde B_n^{\circ}\leq2+M+\max_{i\leq n}|\xi_i|$,
so every clipped loss is at most
$(2M+2+\max_{i\leq n}|\xi_i|)^2$.
The fourth moment of this Gaussian maximum is $O(\log^2(en))$.
Cauchy--Schwarz and \eqref{eq:good-noise-bound} bound the exceptional
risk contribution by $O_M(n^{-15/8}\log(en))=o(\varepsilon_n^2)$.
This proves the upper risk rate in \eqref{eq:risk} for
$\widetilde f_n^{\circ}$ without any density assumption or moment assumption on the network parameters.
\end{proof}
\endgroup

\section{Minimax rates}\label{app:minimax}

\begingroup
\begin{proof}[Proof of Theorem~\ref{cor:risk}, upper bound]
Define the clipped-draw loss
$\ell_n(f)=\norm{C_{\widetilde B_n}\circ f-f_0}_{\Lcal^2(P_X)}^2$.
The posterior mean is an average of clipped draws, so conditional
Jensen's inequality gives
\[
\norm{\widetilde f_n-f_0}_{\Lcal^2(P_X)}^2
\leq\int_{\Ccal(\Xcal)}\ell_n(f)\,
\Pi_n^\eta(\mathrm d f\mid\Dcal_n).
\]
We bound this integral separately on the clipping event
$\mathcal G_n$ from the proof of Theorem~\ref{thm:main} and its
complement.

On $\mathcal G_n$, $\widetilde B_n\leq M+3$, so
$\ell_n(f)\leq(2M+3)^2$ for every $f$.
For draws satisfying $D_\eta(f,f_0)\leq J_0\varepsilon_n^2$,
\eqref{eq:adaptive-loss-transfer} gives the sharper bound
$K_\eta(M+3)J_0\varepsilon_n^2$.
Splitting the posterior integral into these two sets gives
\begin{equation}
\begin{split}
\mathbf1_{\mathcal G_n}\int_{\Ccal(\Xcal)}\ell_n(f)\,
\Pi_n^\eta(\mathrm d f\mid\Dcal_n)
&\leq K_\eta(M+3)J_0\varepsilon_n^2\\
&\quad +(2M+3)^2\Pi_n^\eta
\{D_\eta(f,f_0)>J_0\varepsilon_n^2\mid\Dcal_n\}.
\end{split}
\end{equation}
Lemma~\ref{lem:frac} bounds the expectation of the last posterior
probability by an exponentially small quantity. Hence the risk
contribution on $\mathcal G_n$ is $O(\varepsilon_n^2)$.

On $\mathcal G_n^c$, every cell response mean has absolute value
at most $M+\max_{i\leq n}|\xi_i|$. Thus
$\widetilde B_n\leq2+M+\max_{i\leq n}|\xi_i|$ and
$\ell_n(f)\leq(2M+2+\max_{i\leq n}|\xi_i|)^2$.
The fourth moment of $\max_{i\leq n}|\xi_i|$ is
$O(\log^2(en))$. Cauchy--Schwarz and
\eqref{eq:coarse-good-bound} therefore bound the exceptional
risk contribution by
\[
C_M\log(en)\sqrt{J_n}\exp\{-c_Xn/(16J_n)\}
=o(\varepsilon_n^2).
\]
Combining the two contributions proves \eqref{eq:risk}.
Only bounded clipped loss is integrated under the posterior;
no posterior moment of the R\'enyi divergence is needed.
\end{proof}
\endgroup

\begingroup
\begin{proof}[Proof of Theorem~\ref{cor:risk}, lower bound]
Choose a nonzero Lipschitz function
$\psi:[0,1]^d\to[0,1]$ supported in $[1/4,3/4]^d$,
and extend it by zero outside the unit cube. Let $C_\psi$ be a
Lipschitz constant of $\psi$ in maximum norm, let $m\geq1$ be an
integer, and let $a>0$ be an amplitude to be chosen below.
Partition $[0,1]^d$ into $m^d$ cubes $\mathcal Q_j$ of side
$h=1/m$, with lower corners $x_j$. Define
\[
\psi_j(x)=a h^\beta\psi((x-x_j)/h)
\mathbf1\{x\in\mathcal Q_j\},\qquad
f_\theta=\sum_{j=1}^{m^d}\theta_j\psi_j,
\qquad\theta\in\{0,1\}^{m^d}.
\]
Each bump has height at most $ah^\beta$ and Lipschitz constant
at most $C_\psi ah^{\beta-1}$.
Its difference between two values at maximum-norm distance $r$ is bounded by
$\min\{C_\psi ah^{\beta-1}r,2ah^\beta\}$, which is at most
$C a r^\beta$. Distinct bump supports are separated by at least
$h/2$ in max norm, and the bumps vanish near cell boundaries.
For points at distance less than $h/2$, at most one bump contributes
nonzero values, so its bound applies. For larger distances, the
height bound gives the same estimate with a fixed larger constant.
Thus the sums have H\"older seminorm at most $Ca$, and
$\norm{f_\theta}_\infty\leq a$.
Choose a fixed $a>0$ small enough that every
$f_\theta\in\Hcal_d^\beta(M)$, uniformly in $m$.

Indexing vectors at Hamming distance one differ in a single
coordinate $j$, so their targets differ by $\psi_j$ up to sign.
Put $I_\psi=\int_{[0,1]^d}\psi(u)^2\,\mathrm d u>0$.
The squared $\Lcal^2(P_X)$ distance between these targets, denoted by $\Delta_j^2$,
satisfies
\[
c_Xa^2h^{2\beta+d}I_\psi\leq\Delta_j^2
\leq C_Xa^2h^{2\beta+d}I_\psi.
\]
The $n$-sample Kullback--Leibler divergence between these two
Gaussian-regression experiments is $n\Delta_j^2/2$.
Take $m=\lceil n^{1/(2\beta+d)}\rceil$ and reduce the fixed $a$,
if necessary, so that $C_Xa^2I_\psi/2\leq1/8$.
Since $nh^{2\beta+d}\leq1$, the divergence between any such
neighbouring experiments is at most
$1/8$. Pinsker's inequality then bounds its total variation by $1/4$.

For any estimator $f_n$, choose $\widehat\theta_j\in\{0,1\}$ to
minimize
$\int_{\mathcal Q_j}(f_n(x)-b\psi_j(x))^2P_X(\mathrm d x)$
over $b\in\{0,1\}$, taking zero in a tie.
An estimator with infinite risk already satisfies the desired lower
bound, so it suffices to consider finite-risk estimators.
If the decision is wrong, its chosen candidate is no farther from
$f_n$ than the true candidate in the local norm
$(\int_{\mathcal Q_j}|g(x)|^2P_X(\mathrm d x))^{1/2}$.
The triangle inequality then bounds $\Delta_j$ by twice the local
estimation error, so the local squared loss is at least
$\Delta_j^2/4$.

Average over the uniform distribution on $\{0,1\}^{m^d}$.
For coordinate $j$, mixing over the other coordinates preserves the
$1/4$ total-variation bound by convexity. The average error
probability for each binary decision is therefore at least
$(1-1/4)/2=3/8$. Summing the local losses gives
\[
\inf_{f_n}\sup_{\theta}
\E_{f_\theta}\norm{f_n-f_\theta}_{\Lcal^2(P_X)}^2
\geq c\,m^da^2h^{2\beta+d}I_\psi
\asymp h^{2\beta}\asymp n^{-2\beta/(2\beta+d)}.
\]
The finite family lies in the stated H\"older ball, so the bound also
holds for minimax risk over that ball.
\end{proof}
\endgroup

\section{Bounded covariate supports}\label{app:bounded-support}

The contraction and risk upper bounds do not require the support
of $P_X$ to be a cube. Let $S\subset\mathbb R^d$ be a fixed bounded Borel set,
$P_X(S)=1$, and write $\norm f_{\infty,S}=\sup_{x\in S}|f(x)|$.
Define
\[
\Hcal^\beta(S;M)=\{f:S\to\mathbb R:
\norm f_{\infty,S}\leq M,\
|f(x)-f(y)|\leq M\norm{x-y}_\infty^\beta
\text{ for all }x,y\in S\}.
\]
The condition uses distances in the surrounding space $\mathbb R^d$,
including between different components of $S$. Choose a fixed closed cube $\mathcal Q\supset S$ of side length
$\ell>0$, and regard the {same network and parameter priors as a law}
on the continuous functions $\Ccal(\mathcal Q)$. The eligible-cell clipping rule uses this enclosing cube,
not the precise shape of $S$. Partition $\mathcal Q$ into $s_n^d$ cells
and use \eqref{eq:sparse-clipping} with their counts and means.

\begin{proposition}\label{prop:bounded-support}
For every fixed $0<\beta\leq1$ and $M>0$, the R\'enyi bound
\eqref{eq:renyi-result} and the clipped-draw and risk assertions of
Remark~\ref{rem:arbitrary-design} hold with $\Hcal^\beta(S;M)$
in place of $\Hcal_d^\beta(M)$ and $\Lcal^2(P_X)$ evaluated on $S$.
Their constants and the sample size beyond which they hold may
depend on $\mathcal Q$. Neither the pure
exponential adaptive bound \eqref{eq:l2-result} nor the regular-design
lower bound \eqref{eq:lower} is asserted for arbitrary $S$.
\end{proposition}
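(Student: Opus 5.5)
The plan is to reduce everything to the cube case by an affine change of variables, and then to rerun the existing arguments. Write $\mathcal Q=z_0+\ell\Xcal$ and $T(x)=z_0+\ell x$, an affine bijection $\Xcal\to\mathcal Q$.

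\textbf{Step 1: move the target to $\Xcal$.} Given $f_0\in\Hcal^\beta(S;M)$, apply Lemma~\ref{lem:extension} in the metric space $(\mathbb R^d,\norm{\cdot}_\infty)$ and clip to $[-M,M]$. This gives an extension $\bar f_0$ on $\mathcal Q$ with sup norm at most $M$ and H\"older constant $M$. Then $h=\bar f_0\circ T$ lies in $\Hcal_d^\beta(M')$ with $M'=M\max\{1,\ell^\beta\}$.

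\textbf{Step 2: move the networks back to $\mathcal Q$.} Proposition~\ref{prop:approx} applied to $h$ gives a network $\theta_k^*$ on $\Xcal$. Composing with $T^{-1}(z)=(z-z_0)/\ell$ yields a network on $\mathcal Q$ with the same output coefficients, hidden coefficients $u_m^*/\ell$ and biases $b_m^*-u_m^{*\top}z_0/\ell$. These are bounded by a constant $R_{\mathcal Q}$ depending only on $\mathcal Q$, not by $1$.

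\textbf{Step 3: the prior-mass bound, which I expect to be the main obstacle.} Condition~\eqref{eq:weight-mass} only covers centres with hidden parameters in $[-1,1]$, so it does not apply directly to the transported centres. I would avoid this as follows.

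\begin{itemize}
\item Use positive homogeneity of ReLU, $a\rho(u^\top z+b)=(aR)\rho((u/R)^\top z+b/R)$ with $R=\max\{1,R_{\mathcal Q}\}$. This pushes the hidden parameters back into $[-1,1]$ and multiplies the output coefficients by the fixed factor $R$.
\item The output sum is then at most $RCk^{\omega-\beta/d}$, which is still $\le\tfrac12 k^\omega$ for large $k$. So the centre is admissible.
\item Redo Lemma~\ref{lem:stability} with $D=1+d\sup_{z\in\mathcal Q}\norm z_\infty$ in place of $d+1$, since points of $\mathcal Q$ need not lie in $[0,1]^d$. This changes only constants.
\item Proposition~\ref{prop:smallball} then holds verbatim for $\norm{\cdot}_{\infty,\mathcal Q}$, and hence for $\norm{\cdot}_{\infty,S}$.
\end{itemize}

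\textbf{Step 4: R\'enyi contraction.} Lemma~\ref{lem:frac} never used the cube. Its proof only needs that uniform closeness on the support of $P_X$ controls the likelihood ratio, and $P_X(S)=1$. So the R\'enyi bound \eqref{eq:renyi-result} follows exactly as in the proof of Theorem~\ref{thm:main}, with $D_\eta$ computed under $P_X$ on $S$.

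\textbf{Step 5: clipping for arbitrary designs.} Rerun the proof of Remark~\ref{rem:arbitrary-design} with the $s_n^d$ cells partitioning $\mathcal Q$.

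\begin{itemize}
\item Each cell has max-norm diameter $\ell/s_n$, so within an eligible cell the averaging bound uses the H\"older inequality on $S$ with ambient distances. It gives $|f_0(x)|\le|\overline Y^\circ_{j,n}|+|\overline\xi^\circ_{j,n}|+M\ell^\beta s_n^{-\beta}$ for $x\in S\cap\mathcal Q^\circ_{j,n}$.
\item The last term is at most $1$ for $n$ beyond a $\mathcal Q$-dependent threshold.
\item Cells missing $S$ carry no $P_X$-mass and contain no observations, so they are simply ineligible.
\item The bounds \eqref{eq:good-noise-bound}, \eqref{eq:underoccupied-mean} and \eqref{eq:underoccupied-tail} are design-free and need no change.
\item Lemma~\ref{lem:clipping} is stated for arbitrary $P_X$, so it applies with the loss computed on $S$.
\end{itemize}

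Combining these gives the clipped-draw tail bound and, via the same conditional Jensen and Cauchy--Schwarz split, the risk bound. All constants depend on $\mathcal Q$ only through $\ell$, $R$ and $D$.
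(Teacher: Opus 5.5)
Your proof is correct and follows the paper's argument step for step: H\"older extension with clipping, approximating networks whose centres are eventually admissible, a $\mathcal Q$-dependent constant in Lemma~\ref{lem:stability}, Lemma~\ref{lem:frac} applied unchanged, and the eligible-cell argument of Remark~\ref{rem:arbitrary-design} run on the cells of $\mathcal Q$. The one variation is how you obtain admissible centres. The paper reruns the unit-ball approximation after scaling by a radius $R\geq\max\{1,\sup_{x\in\mathcal Q}\norm{x}_2\}$, so the hidden-layer coefficients and biases are bounded by one directly. You instead apply Proposition~\ref{prop:approx} as a black box after an affine map onto $\Xcal$, and then restore that bound by positive homogeneity of ReLU; this costs a fixed factor on the output coefficients, which admissibility absorbs for large $k$.
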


\begingroup
\begin{proof}
Lemma~\ref{lem:extension}, followed by clipping to $[-M,M]$,
extends each target from $S$ to $\mathbb R^d$ without increasing
its supremum bound or H\"older constant.
Choose a fixed
$R\geq\max\{1,\sup_{x\in\mathcal Q}\norm x_2\}$.
The rescaled extension $z\mapsto f_0(Rz)$ has a uniformly bounded
H\"older norm on the unit ball. Apply the argument of
Proposition~\ref{prop:approx} to this function and rescale the network
back to $\mathcal Q$.
The bounds on width and on the sum of absolute output coefficients retain their
orders in $\varepsilon$, and the approximating {hidden-layer coefficients and biases}
remain bounded by one.
In Lemma~\ref{lem:stability}, the bound for perturbing {a hidden-layer coefficient or bias changes only} by the fixed factor
$1+\sup_{x\in\mathcal Q}\norm x_1$.
The approximating centres are eventually admissible in
\eqref{eq:weight-centres}. The parameter-box condition
\eqref{eq:weight-mass} and the width condition \eqref{eq:width}
therefore give the same uniform prior-probability bound on $\mathcal Q$.
Lemma~\ref{lem:frac} then proves \eqref{eq:renyi-result}, with
constants allowed to depend on $\mathcal Q$.

Intersect the equal cells of the enclosing cube with $S$.
These intersections partition $S$ into at most $s_n^d$
sets, each of max-norm diameter at most $\ell/s_n$.
For sufficiently large $n$, $M(\ell/s_n)^\beta\leq1$.
The proofs of \eqref{eq:good-noise-bound}--\eqref{eq:underoccupied-tail}
use only the number of cells, their probabilities, and their
binomial counts. They therefore apply to these intersections,
including any having probability zero.
The clipped-loss and risk bounds follow as in
Remark~\ref{rem:arbitrary-design}.
The separate pure exponential bound for coarse-partition clipping
would require an additional lower bound on the probabilities of
its relevant cells; no such bound is assumed here.
\end{proof}
\endgroup

\section{Simulation design and computation}\label{app:simulation}

All simulation code and reproducibility materials are available at\\
\mbox{\url{https://github.com/hiendn/bayesian-relu-minimax}}.
The study uses likelihood power $\eta=0.9$ and sample sizes from
64 to 4096. For each target, sample size and parameter family, we fit
50 replications. The Gaussian width-prior comparisons use the same
fits within each target and sample size. Supplementary Tables~\ref{tab:supp-half}
and~\ref{tab:supp-lipschitz} give all mean errors, Monte Carlo standard
errors and pointwise 95\% Monte Carlo confidence intervals.

\subsection{Targets, prior, and loss}
The numerical study uses $d=1$, $P_X=\Unif[0,1]$, and independent
$\mathrm N(0,1)$ errors. {The two fixed targets are
\begin{equation}\label{eq:simulation-truth}
\begin{aligned}
f_{1/2}(x)&=\sqrt{\abs{\sin(2\pi x)}},\\
f_1(x)&=\abs{\sin(2\pi x)},
\end{aligned}
\qquad x\in[0,1].
\end{equation}
Both have supremum norm one. For every $x,y\in[0,1]$,
$\bigl|\abs{\sin(2\pi x)}-\abs{\sin(2\pi y)}\bigr|
\leq2\pi|x-y|$ and
$|\sqrt u-\sqrt v|\leq\sqrt{|u-v|}$ for $u,v\geq0$ give
\[
\begin{aligned}
|f_{1/2}(x)-f_{1/2}(y)|&\leq\sqrt{2\pi}\,|x-y|^{1/2},\\
|f_1(x)-f_1(y)|&\leq2\pi|x-y|.
\end{aligned}
\]
Thus $f_{1/2}\in\Hcal_1^{1/2}(\sqrt{2\pi})$ and
$f_1\in\Hcal_1^1(2\pi)$; in particular, both belong to their
respective classes with the common radius $2\pi$.
Both functions have two peaks, at $1/4$ and $3/4$.
At the interior zero $x_0=1/2$,
\[
f_{1/2}(x_0+h)-f_{1/2}(x_0)
=\sqrt{2\pi}\,|h|^{1/2}+O(|h|^{5/2}),
\]
so $f_{1/2}$ has no larger H\"older exponent at this point and is
not Lipschitz. The one-sided derivatives of $f_1$ differ by $4\pi$
at $x_0$. Within each lobe,
$f_1''=-(2\pi)^2f_1<0$, so $f_1$ is not piecewise linear.
Every finite shallow ReLU network is Lipschitz and piecewise linear.
Consequently, neither target is represented exactly by such a
network. Both functions are evaluated directly using elementary
operations.} The clipping procedure uses neither smoothness
parameter nor radius.

The Gaussian baseline has $\pi_{\mathsf K}(k)=2^{-k}$,
$U_m,B_m\sim\mathrm N(0,1)$, and
$A_m\mid\mathsf K=k\sim\mathrm N(0,k^3)$, {independently across all scalar network parameters.} The fractional power is $\eta=0.9$. The same fixed prior
is used for both targets and every sample size. Numerical computation
conditions on $\mathsf K\leq64$; the theorem uses unbounded width.

{The additional uniform and Student $t_{10}$ comparisons use
geometric width. In the uniform case, $U_m,B_m\sim\Unif[-2,2]$ and
$A_m\mid\mathsf K=k\sim\Unif[-k^2,k^2]$. In the Student case,
all network parameters independently follow a unit-scale $t_{10}$ law,
with no width-dependent scaling. Their Figure~\ref{fig:adaptive-risk}
labels are U/G and $t_{10}$/G.}

{Original replicate indices $1,\ldots,50$ identify the data streams. For a fixed target, the same indexed stream is used across parameter families and sample sizes, with nested data as $n$ increases. Every original replication is included, and neither risk nor credible coverage is used to select a computation stage. The released seeds and settings reproduce these fits.}

\subsection{Adaptive clipping and prediction risk}
\label{app:simulation-adaptive-risk}
For each data set we divide $[0,1]$ into $m_n=\lceil\log n\rceil$
equal cells and calculate the cutoff in \eqref{eq:adaptive-clipping}, setting an empty
cell mean to zero.
The same responses determine the clipping and the fractional posterior.
We average draws clipped at $\widetilde B_n$ as in
\eqref{eq:adaptive-mean}. Since $\widetilde B_n\geq2$ and both
{targets are bounded by one, the clipping interval always covers each
target in this study.}

{Across the reported fits, the smallest coarse-cell counts were 4 for $\beta=1/2$ and 6 for $\beta=1$.}

Write
$\Pi_{n,64}^{0.9}$ for the numerical
fractional posterior conditional on $\mathsf K\leq64$ and
$q_{n,\beta}=(\log n/n)^{2\beta/(2\beta+1)}$.

We approximate integrals under the uniform design by equally weighted
averages over a fixed grid, with one point in each of 1009 equal
subintervals:
\begin{equation}\label{eq:simulation-grid}
x_g=\frac{g-1/2+\sqrt2-1}{1009},\qquad g=1,\ldots,1009.
\end{equation}
This shifted rectangle rule uses a fixed offset of $\sqrt2-1$ grid
spacings so that its points do not coincide with the targets' central
zero at $1/2$ or peaks at $1/4$ and $3/4$.
Related uses of irrational evaluation points to avoid exact alignment
are discussed by \citet{bartuska2025} and \citet{shampine2008}.
These precedents motivate avoiding exact alignment; they do not
establish improved accuracy or a convergence rate for our chosen offset.
The same grid is used for every target, prior, sample size
and replication; it is fixed independently of the observations and
fitted functions. For the Gaussian fits in
Figure~\ref{fig:both-credible}, we independently recomputed the clipped
posterior means from the sampled widths and hidden-layer parameters,
integrating over the output coefficients, and then recomputed their
squared errors on this grid. These were compared with the estimates
produced during fitting.
The largest absolute discrepancies were below $10^{-10}$.
This agreement checks the calculation of the estimates; it does not
bound numerical integration error or residual Markov-chain error.

If $\widetilde f_{n,r}$ is the numerical adaptive posterior mean for
replicate $r$, its approximate integrated squared error is
\begin{equation}\label{eq:simulation-risk}
\widetilde R_{n,r,\beta}=\frac1{1009}\sum_{g=1}^{1009}
\left\{\widetilde f_{n,r}(x_g)-f_\beta(x_g)\right\}^2.
\end{equation}
{For Gaussian network parameters, we integrate the clipped prediction conditional
on the width, hidden-layer coefficients and biases, then average over
the posterior samples as described in Appendix~\ref{app:simulation-computation}.
This integration reduces Monte Carlo noise. Clipping the posterior
mean after averaging would define a different estimator.}

Each reported risk is the mean of 50 original diagnostic-passing
replicate errors. Let $\overline R_{n,\beta}$ be their mean and
$s_{n,\beta}$ their sample standard deviation. The Monte Carlo
standard error is $s_{n,\beta}/\sqrt{50}$. We report pointwise 95\%
asymptotic Monte Carlo confidence intervals using the usual
normal approximation. Let $\Phi$ be the standard normal distribution
function and $z_{0.975}=\Phi^{-1}(0.975)\approx1.95996$. The interval is
\[
\overline R_{n,\beta}\;\pm\;
z_{0.975}\frac{s_{n,\beta}}{\sqrt{50}}.
\]
These intervals quantify simulation uncertainty in the mean squared error
across all 50 original data replications within each case. They are not posterior
credible intervals and do not bound residual Markov-chain bias.
Since each target is bounded by one, the clipping bound satisfies
$\widetilde B_n\leq3+\max_{i\leq n}|\xi_i|$, where $\xi_i$ are the regression errors in \eqref{eq:model}, and each replicate
error is at most $(4+\max_{i\leq n}|\xi_i|)^2$. It therefore has
finite variance, supporting the normal approximation as the number
of reported completed replications increases.
In each risk-figure panel, the upper-rate scale $q_{n,\beta}$ and
lower-rate scale $n^{-2\beta/(2\beta+1)}$ are both normalized to the
Gaussian/geometric mean error at $n=64$. Their vertical constants
are illustrative and do not estimate the unknown theoretical
constants. The minimax lower rate is a class benchmark rather than
a lower bound on the error at either fixed target.

For adjacent sample sizes and for the two endpoints, we also report
the decay exponent
\[
p_{a,b}=\frac{\log(\overline R_{a,\beta}/\overline R_{b,\beta})}
{\log(b/a)},\qquad a<b.
\]
Its 95\% percentile bootstrap interval uses 2000 resamples of the 50 original replicate indices, paired across sample sizes for each target. This is a nonparametric percentile bootstrap applied to paired replication errors \citep[Sections~2, 7--8]{efron1985bootstrap}: the two errors from a replication form one resampling unit, and the interval endpoints are the empirical 2.5th and 97.5th percentiles of the recalculated exponents. This preserves dependence from the reused data streams. These fixed-target intervals do not bound residual Markov-chain error.

These exponents describe the displayed targets over a finite range of
sample sizes and do not estimate worst-case H\"older-ball rates.

{The function estimates in Figure~\ref{fig:lipschitz-credible} use original replicate one for both targets at $n=1024$ and $4096$, under uniform parameters and geometric width. We evaluate and clip the regression function for each posterior draw of
the output coefficients and hidden parameters, combine the draws from
the independent ladders, and calculate the posterior mean and empirical
type-1 equal-tailed 95\% intervals. Repeating these simulations with the
same data, seeds and settings reproduces the predictions and diagnostic
summaries exactly. The intervals concern the latent
regression function, with no observation noise added. The true
central-notch value zero lies in none of the intervals. At $n=4096$, the H\"older $1/2$ mean is $0.400525$ with interval $[0.194452,0.593273]$; the Lipschitz mean is $0.168787$ with interval $[0.025630,0.298567]$.

Figure~\ref{fig:both-credible} shows original replicate one for both targets at $n=1024$ and $4096$, under Gaussian parameters and geometric width. Conditional on each posterior draw of width and hidden parameters,
the regression function is normal with the mean and variance below. We invert the equal-weight mixture at probabilities $0.025$ and $0.975$, then clip the quantiles at $\pm\widetilde B_n$. The intervals include output-coefficient, width and hidden-parameter uncertainty. At the central zero, none of the intervals contain zero. At $n=4096$, the H\"older $1/2$ mean is $0.373204$ with interval $[0.186481,0.570900]$; the Lipschitz mean is $0.155984$ with interval $[0.003281,0.280728]$.}

\begin{figure}[ht]
\centering
\includegraphics[width=\textwidth]{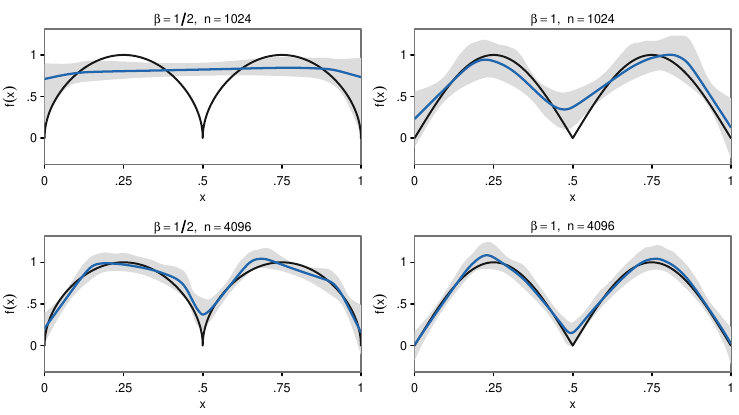}
\caption{{True functions, clipped posterior means, and
pointwise 95\% credible intervals for both targets at $n=1024,4096$.
Original replicate one; Gaussian parameters, geometric width, $\eta=0.9$,
and $\mathsf K\leq64$.}}
\label{fig:both-credible}
\end{figure}

\subsection{Posterior computation}\label{app:simulation-computation}

\begingroup
\paragraph{Numerical target.}
The accompanying R code implements the sampling transitions directly.
For this one-dimensional study, a full network state is
$\theta=(k,a,u,b)$, with $a,u,b\in\mathbb R^k$, on
$\mathcal T_{64}=\bigsqcup_{k=1}^{64}
(\{k\}\times\mathbb R^{3k})$.
Write $p_k(a,u,b)$ for the conditional parameter density of the
chosen row of Table~\ref{tab:families}, and
$\pi_{64}(k)=2^{-k}/(1-2^{-64})$, $1\leq k\leq64$, for the
conditioned geometric width law. At likelihood power
$0\leq\lambda\leq\eta=0.9$, the target density is
\begin{equation}\label{eq:simulation-target}
p_{\lambda,64}(\theta\mid\Dcal_n)\propto
\pi_{64}(k)p_k(a,u,b)
\exp\{-\lambda R(\theta)/2\},\qquad
R(\theta)=\sum_{i=1}^n\{Y_i-f_\theta(X_i)\}^2.
\end{equation}
The reference measure counts widths and uses Lebesgue measure on
the scalar network parameters. The likelihood uses the unclipped network;
clipping enters the prediction average in \eqref{eq:adaptive-mean}.
The chain at power $0.9$ targets the numerical fractional posterior.
At power zero the target is the conditioned prior.

\endgroup

\paragraph{{Gaussian {output coefficients}.}}
For Gaussian network parameters, the {output coefficients} can be integrated
analytically. Conditional on width $k$ and the {hidden-layer coefficients and biases},
write $\boldsymbol\Phi$ for the $n\times k$ matrix with entries
$\rho(U_mX_i+B_m)$ and $Y=(Y_1,\ldots,Y_n)^\top$.
Let $I_j$ denote the $j\times j$ identity matrix. For vector arguments,
$\mathrm N(\mu,\Sigma)$ denotes the normal distribution with mean
vector $\mu$ and covariance matrix $\Sigma$. In this
one-dimensional experiment, the output vector has prior
$\mathrm N(0,k^3I_k)$.
At likelihood power $0\leq\lambda\leq\eta$, define
\begin{equation}\label{eq:simulation-collapse}
Q_\lambda=k^{-3}I_k+\lambda\boldsymbol\Phi^\top\boldsymbol\Phi,
\qquad
m_\lambda=\lambda Q_\lambda^{-1}\boldsymbol\Phi^\top Y.
\end{equation}
For an {output coefficient vector} $a\in\mathbb R^k$, multiplying
the prior density by the powered Gaussian likelihood and
completing the square gives
\[
k^{-3}a^\top a+\lambda\norm{Y-\boldsymbol\Phi a}_2^2
=(a-m_\lambda)^\top Q_\lambda(a-m_\lambda)
+\lambda Y^\top Y-m_\lambda^\top Q_\lambda m_\lambda.
\]
Hence the conditional output posterior is
$\mathrm N(m_\lambda,Q_\lambda^{-1})$.
If $\varphi_k(a;0,k^3I_k)$ is the Gaussian prior density, integration
of this completed square yields
\[
\begin{split}
&\int_{\mathbb R^k}(2\pi)^{-\lambda n/2}
 e^{-\lambda\norm{Y-\boldsymbol\Phi a}_2^2/2}
 \varphi_k(a;0,k^3I_k)\,\mathrm d a\\
&\qquad=(2\pi)^{-\lambda n/2}k^{-3k/2}\det(Q_\lambda)^{-1/2}
 \exp\left\{-\frac\lambda2Y^\top Y
 +\frac12m_\lambda^\top Q_\lambda m_\lambda\right\}.
\end{split}
\]
Using
$k^{3k}\det(Q_\lambda)
=\det(I_k+\lambda k^3\boldsymbol\Phi^\top\boldsymbol\Phi)
=\det(I_n+\lambda k^3\boldsymbol\Phi\boldsymbol\Phi^\top)$
and the Woodbury identity yields
\begin{equation}\label{eq:simulation-marginal}
\det(I_n+\lambda k^3\boldsymbol\Phi\boldsymbol\Phi^\top)^{-1/2}
\exp\left\{-\frac\lambda2Y^\top
(I_n+\lambda k^3\boldsymbol\Phi\boldsymbol\Phi^\top)^{-1}Y\right\},
\end{equation}
after omitting $(2\pi)^{-\lambda n/2}$, which is independent of the
network state.
We evaluate the factor using a $k\times k$ Cholesky factorization of
$Q_\lambda$. The implementation also omits $-\lambda Y^\top Y/2$
from its log value. These terms depend only on the data and the assigned power, so they
cancel in both within-level moves and exchanges between levels;
all terms depending on width or
{hidden-layer coefficients and biases} are retained. The calculation takes
$O(nk^2+k^3)$ time and $O(nk+k^2)$ working memory.
Grid prediction adds $O(1009k^2)$ time.

\begingroup
For a Gaussian state $s=(k,u,b)$, containing the width,
hidden-layer coefficients and biases, let
$\ell_\lambda(s)$ be the log of the factor in
\eqref{eq:simulation-marginal}, and let $\varphi$ be the standard
normal density. The sampled marginal density is
\[
p^{\mathrm G}_{\lambda,64}(s\mid\Dcal_n)\propto
\pi_{64}(k)\prod_{m=1}^k\varphi(u_m)\varphi(b_m)
e^{\ell_\lambda(s)}.
\]
The prior densities of the hidden-layer coefficients and biases remain in this density. Gaussian
integration removes the $k$ {output coefficients} from the sampled
state and retains their exact conditional distribution.
The factor is evaluated separately at each power: integrating a
powered likelihood generally differs from powering its integral.

\paragraph{Gaussian Metropolis--Hastings updates.}
At a fixed width, for a proposal density $q_\lambda(s'\mid s)$,
the acceptance rule is
\[
1\wedge
\frac{p^{\mathrm G}_{\lambda,64}(s'\mid\Dcal_n)
q_\lambda(s\mid s')}
{p^{\mathrm G}_{\lambda,64}(s\mid\Dcal_n)
q_\lambda(s'\mid s)}.
\]
Each iteration selects a width change with probability $0.25$.
Otherwise it chooses one unit uniformly. With probability $0.2$,
the hidden-layer coefficient and bias of that unit are independently redrawn
from $\mathrm N(0,1)$. The prior and proposal densities cancel,
giving acceptance probability $1\wedge e^{\Delta\ell_\lambda}$,
where $\Delta\ell_\lambda=\ell_\lambda(s')-\ell_\lambda(s)$.
Otherwise either its hidden-layer coefficient or its bias is chosen
with equal probability. Given its current value $z$, the proposed value
is the random variable $Z'=z+\zeta$, where
$\zeta\sim\mathrm N(0,0.65^2)$. For a realized proposal $z'$, the
acceptance probability is
\[
1\wedge\exp\{\Delta\ell_\lambda-(z'^2-z^2)/2\}.
\]
The proposal is symmetric, and the final term is the Gaussian
prior-density ratio for the selected parameter.

For width changes, define the birth probability
\[
b_k=\begin{cases}
1,&k=1,\\
1/2,&1<k<64,\\
0,&k=64,
\end{cases}
\qquad d_k=1-b_k.
\]
A birth chooses one of the $k+1$ insertion positions uniformly
and draws the new unit's hidden-layer coefficient and bias from their Gaussian priors.
A death chooses one of the $k$ active positions uniformly and
deletes that unit. Their acceptance probabilities are
\begin{equation}\label{eq:simulation-gaussian-dimension}
\alpha_{\mathrm b}=1\wedge
\left(e^{\Delta\ell_\lambda}\frac{d_{k+1}}{2b_k}\right),
\qquad
\alpha_{\mathrm d}=1\wedge
\left(e^{\Delta\ell_\lambda}\frac{2b_{k-1}}{d_k}\right),
\end{equation}
for permitted births and deaths, respectively.
Forward insertion and reverse deletion choose the same position
with equal probability, and the prior density of the new hidden-layer coefficient and bias cancels
with its proposal density. The factors $1/2$ and $2$ are width-prior
ratios. The insertion map has Jacobian magnitude one; units are
represented in an ordered list, so no factorial correction is
needed \citep{green1995}. The marginal factor is recomputed at
the proposed width, including the changed $k^3$ output-prior
variance and all width-dependent normalizers.

\paragraph{Uniform and Student $t_{10}$ updates.}
The uniform and Student $t_{10}$ samplers retain output coefficients,
hidden-layer coefficients and biases, and use the full density in
\eqref{eq:simulation-target}. Let $z_m=(a_m,u_m,b_m)$ and
$g_k(z_m)$ be the product of the prior densities of these three parameters,
so that $p_k=\prod_{m=1}^k g_k(z_m)$.
A birth draws the new triple from $g_{k+1}$ and inserts it at a
uniformly chosen position; a death deletes a uniformly chosen
triple. For a birth from $\theta$ to $\theta'$, the acceptance rule is
\begin{equation}\label{eq:simulation-direct-birth}
1\wedge\left[
e^{-\lambda\{R(\theta')-R(\theta)\}/2}
\frac{d_{k+1}}{2b_k}
\prod_{m=1}^k\frac{g_{k+1}(z_m)}{g_k(z_m)}
\right].
\end{equation}
For a death from width $k$ to $k-1$, write
$z'_1,\ldots,z'_{k-1}$ for the remaining triples. Its rule is
\[
1\wedge\left[
e^{-\lambda\{R(\theta')-R(\theta)\}/2}
\frac{2b_{k-1}}{d_k}
\prod_{m=1}^{k-1}\frac{g_{k-1}(z'_m)}{g_k(z'_m)}
\right].
\]
The inserted or deleted triple's density cancels with its proposal
factor. Prior densities for the retained units must still be evaluated at
the destination width. In the uniform case the birth product is
$(k/(k+1))^{2k}$; the death product is
$(k/(k-1))^{2(k-1)}$ when every remaining
$\abs{a'_m}\leq(k-1)^2$, and is zero otherwise. For Student network parameters,
the products equal one because the parameter priors do not depend
on width.

Each direct-sampler iteration makes one width move with probability
$0.25$, otherwise one update of a hidden-layer coefficient or bias,
followed by updates of all output coefficients in a random order.
For the update of a hidden-layer coefficient or bias, the sampler chooses
one unit uniformly and then chooses its coefficient or bias with equal
probability. With probability $0.25$ it redraws the selected parameter from its prior,
giving acceptance probability
$1\wedge e^{-\lambda\{R(\theta')-R(\theta)\}/2}$.
Otherwise it proposes a Gaussian random walk with standard
deviation $0.4$, retaining the selected parameter's prior-density
ratio in the Metropolis--Hastings rule. When the parameter prior is uniform, proposals outside its
support are rejected.

For the updates of the output coefficients, put
$\phi_m=(\rho(u_mX_i+b_m))_{i=1}^n$,
$r_{-m}=Y-\sum_{\substack{j=1\\j\ne m}}^k a_j\phi_j$,
$v_m=\norm{\phi_m}_2^2$, and $c_m=\phi_m^\top r_{-m}$.
For uniform output coefficients, if $\lambda v_m>0$ the conditional law is
\[
a_m\mid\text{other parameters},\Dcal_n
\sim\mathrm N\!\left(c_m/v_m,(\lambda v_m)^{-1}\right)
\quad\text{truncated to }[-k^2,k^2].
\]
If $\lambda v_m=0$, it is the uniform prior on that interval.
For $\alpha,\gamma>0$, $\operatorname{IG}(\alpha,\gamma)$ denotes the inverse gamma distribution with density proportional to
$v^{-\alpha-1}e^{-\gamma/v}$ on $v>0$.
For Student output coefficients, introduce an auxiliary variance
$V_m\sim\operatorname{IG}(5,5)$ with
$a_m\mid V_m\sim\mathrm N(0,V_m)$.
The two conditional updates are
\begin{equation}\label{eq:simulation-student-gibbs}
\begin{split}
V_m\mid a_m&\sim
\operatorname{IG}\!\left(11/2,(10+a_m^2)/2\right),\\
a_m\mid V_m,\text{other parameters},\Dcal_n&\sim
\mathrm N\!\left(
\frac{\lambda c_m}{\lambda v_m+V_m^{-1}},
(\lambda v_m+V_m^{-1})^{-1}\right).
\end{split}
\end{equation}
They define a Gibbs transition for the {output coefficient}, with
the auxiliary variance regenerated during each sweep.
Residuals are updated after each output coefficient. These conditional
updates and the preceding Metropolis--Hastings moves preserve
the target at their assigned power.
At width $k$, network evaluation and the vector arithmetic in a
sweep over the output coefficients take $O(nk)$ operations per tempering level and
$O(nk)$ working memory, excluding retained diagnostics. A grid
prediction takes $O(1009k)$ operations.

\paragraph{Exchanges between likelihood powers.}
Parallel tempering \citep{jasra2007} uses 16 interacting chains at fixed powers
\[
\begin{gathered}
0,0.0009,0.0018,0.0036,0.0072,0.0144,0.0288,0.0576,
\\
0.1152,0.2304,0.3456,0.4608,0.576,0.6912,0.8064,0.9.
\end{gathered}
\]
This set of chains is called a ladder. Its invariant joint
distribution is the product of the 16 distributions at these powers. Every iteration updates every level once
as described above. Every fifth iteration proposes exchanges
between disjoint adjacent pairs, alternating
$(1,2),(3,4),\ldots$ and $(2,3),(4,5),\ldots$.
For {Gaussian states} $s,t$ at powers $\lambda,\lambda'$, the
exchange probability is
\[
1\wedge\exp\{\ell_\lambda(t)+\ell_{\lambda'}(s)
-\ell_\lambda(s)-\ell_{\lambda'}(t)\}.
\]
Each state is evaluated at its destination power and the common
prior terms cancel. For the direct samplers it simplifies to
\[
1\wedge\exp\left\{\frac{\lambda-\lambda'}2
\bigl[R(\theta)-R(\theta')\bigr]\right\},
\]
where $\theta,\theta'$ are the states currently at
$\lambda,\lambda'$. The exchanges preserve the joint target.
Only states at the final power $0.9$ are retained for estimation.

\paragraph{Prediction from posterior samples.}
For a {Gaussian state} at power $\eta$, let
$\boldsymbol\phi_x=(\rho(u_mx+b_m))_{m=1}^k$.
The conditional latent prediction is normal with
$\mu_x=\boldsymbol\phi_x^\top m_\eta$ and
$\sigma_x^2=\boldsymbol\phi_x^\top Q_\eta^{-1}\boldsymbol\phi_x$.
Put $B=\widetilde B_n$ for this data set.
If $\sigma_x>0$, set
$\alpha_x=(-B-\mu_x)/\sigma_x$ and
$\gamma_x=(B-\mu_x)/\sigma_x$. The conditional clipped mean is
\begin{equation}\label{eq:simulation-clipped-normal}
\begin{split}
h_x(s)={}&-B\Phi(\alpha_x)+B\{1-\Phi(\gamma_x)\}\\
&+\mu_x\{\Phi(\gamma_x)-\Phi(\alpha_x)\}
+\sigma_x\{\varphi(\alpha_x)-\varphi(\gamma_x)\}.
\end{split}
\end{equation}
If $\sigma_x=0$, set $h_x(s)=C_B(\mu_x)$.
For the combined posterior samples $s_1,\ldots,s_S$, the Gaussian estimate
is $S^{-1}\sum_{j=1}^S h_x(s_j)$. The direct samplers instead
average $C_B(f_{\theta_j}(x))$ over draws of all network parameters
$\theta_1,\ldots,\theta_S$.
Both averages approximate the clipped posterior mean in
\eqref{eq:adaptive-mean}; $\sigma_x^2$ is the conditional variance
of the latent function, without observation-noise variance.

\paragraph{Initialization, retention, and diagnostics.}
Separate ladders use independent seeds. For Gaussian network parameters the
chain at power $\eta=0.9$ in ladder $j$ starts at width $j$, giving widths
$1,\ldots,L$ for a run with $L$ ladders.
Other Gaussian levels, and all direct-sampler levels, start at
$k=\min\{64,1+Z\}$, where
$\PP(Z=z)=2^{-(z+1)}$, $z=0,1,\ldots$.
Network parameters are drawn from their conditional priors;
Gaussian {output coefficients} are integrated rather than initialized.
These are starting distributions; the acceptance ratios use the
conditioned prior in \eqref{eq:simulation-target}.
\endgroup

{Table~\ref{tab:simulation-computation-stages} specifies the eight computation stages, all fixed before this $\eta=0.9$ study. A later stage is considered only after all preceding stages fail the unchanged diagnostic screens, and the first accepted stage is retained. Every stage uses the same data, priors, 16 likelihood powers, parameter updates, clipping, width cap and diagnostic thresholds. Independent chain seeds depend on the stage index. We discard the first half of each ladder as burn-in and retain every twentieth subsequent state at power $\eta=0.9$.

\begin{table}[ht]
\centering\small
\caption{Computation stages. Later stages are used only after every
preceding stage fails the unchanged diagnostics. All eight stages were fixed before the study.}
\label{tab:simulation-computation-stages}
\begin{tabular}{@{}rrrr@{}}
\toprule
Stage & Independent ladders & Iterations per ladder & Posterior draws\\
\midrule
1 & 3 & 24,000 & 1,800\\
2 & 4 & 48,000 & 4,800\\
3 & 6 & 96,000 & 14,400\\
4 & 8 & 192,000 & 38,400\\
5 & 8 & 384,000 & 76,800\\
6 & 8 & 768,000 & 153,600\\
7 & 8 & 1,536,000 & 307,200\\
8 & 8 & 3,072,000 & 614,400\\
\bottomrule
\end{tabular}
\end{table}

We require potential scale reduction factor
$\widehat R\leq1.10$ \citep{gelman1992inference} and pooled effective sample size at least 50
for width and for clipped predictions at five points near
$0.1,0.3,0.5,0.7,0.9$.
For uniform and Student network parameters, the corresponding unclipped
prediction diagnostics must also pass.
We calculate these diagnostics with \texttt{coda}
\citep{plummer2006coda}.

For each diagnostic, the pooled effective sample size is the sum of
the per-ladder estimates. We also require the width to remain below
64 at every post-burn-in iteration at likelihood power $0.9$,
including iterations omitted by thinning. For each parameter family,
target and sample size, the fits for all 50 replications pass these
checks. Estimated risk is never used to select fits or computation stages.

The calculations used R~4.4.0 on an arm64 macOS computer. The repository
provides the settings and seeds for each reported fit. For the
displayed Gaussian fits, independently recomputing posterior means and
squared errors from the sampled parameters gives agreement to within
$10^{-10}$, as detailed in Appendix~\ref{app:simulation-adaptive-risk}.}

{All fits pass the diagnostic criteria. The original eight computation stages were specified before the study. After the sole unresolved original fit, U/G for the H\"older $1/2$ target at $n=4096$, replication 15, failed all eight stages, four additional stages were specified before their execution. These stages use 6.144, 12.288, 24.576 and 49.152 million iterations per ladder, respectively, with eight independent ladders, half burn-in and thinning 20; the first passing stage is retained. The unresolved fit first passed the diagnostics at stage 10. The original data, seed formula, priors, sampler, 16 likelihood-power levels and diagnostic thresholds are unchanged. No fit occupies the width cap after burn-in. These diagnostic screens do not prove exploration of every posterior mode or bound residual sampling error.}

{The finite chain lengths and the use of grid averages to approximate
design integrals may introduce residual error.
Conditioning on width at most 64 changes the prior, and convergence
diagnostics from the sampled chains do not bound posterior mass
beyond that limit.}

{Under geometric width and Gaussian network parameters, mean errors at $n=64$ and $4096$ are $0.08501$ and $0.00597$ for $\beta=1/2$, and $0.12262$ and $0.00408$ for $\beta=1$. The endpoint exponents $p_{64,4096}$ are $0.639$ and $0.818$, with 95\% paired-bootstrap intervals $[0.609,0.670]$ and $[0.791,0.846]$. These finite-range comparisons describe fixed targets rather than worst-case H\"older-ball risk.}

Table~\ref{tab:simulation-adaptive-risk} gives the Gaussian/geometric
errors and clipping means at all sample sizes.
\begin{table}[t]
\caption{Squared prediction error with Gaussian parameters and geometric width. Each estimate uses 50 independent replications. The SE column gives Monte Carlo standard errors. Supplementary Tables~\ref{tab:supp-half} and~\ref{tab:supp-lipschitz} give pointwise 95\% normal Monte Carlo confidence intervals.}
\label{tab:simulation-adaptive-risk}
\centering\small
\begin{tabular}{@{}crcccc@{}}
\toprule
$\beta$ & $n$ & Mean $\widetilde B_n$ & Error & SE & Error/$q_{n,\beta}$\\
\midrule
$1/2$ & 64 & 3.22 & 0.0850 & 0.0038 & 0.333\\
 & 128 & 3.10 & 0.0745 & 0.0035 & 0.383\\
 & 256 & 3.08 & 0.0633 & 0.0014 & 0.430\\
 & 512 & 3.06 & 0.0508 & 0.0018 & 0.460\\
 & 1024 & 3.02 & 0.0228 & 0.0020 & 0.278\\
 & 2048 & 3.02 & 0.0100 & 0.0004 & 0.164\\
 & 4096 & 3.02 & 0.0060 & 0.0002 & 0.132\\
\midrule
$1$ & 64 & 3.09 & 0.1226 & 0.0040 & 0.759\\
 & 128 & 3.02 & 0.1062 & 0.0027 & 0.941\\
 & 256 & 3.05 & 0.0911 & 0.0027 & 1.173\\
 & 512 & 3.03 & 0.0407 & 0.0046 & 0.769\\
 & 1024 & 2.99 & 0.0113 & 0.0007 & 0.317\\
 & 2048 & 2.96 & 0.0067 & 0.0003 & 0.277\\
 & 4096 & 2.99 & 0.0041 & 0.0002 & 0.255\\
\bottomrule
\end{tabular}
\end{table}

\subsection{Sensitivity to the fixed width law}
\label{app:simulation-width}
We keep the Gaussian conditional network-parameter law and use the fixed width
laws
\begin{equation}\label{eq:simulation-width-laws}
\pi_G(k)=2^{-k},\qquad
\pi_P(k)=\frac{e^{-1}}{(k-1)!},\qquad
\pi_H(k)=\frac{k^{-5/2}}{\sum_{j=1}^\infty j^{-5/2}},\quad k\geq1.
\end{equation}
They satisfy \eqref{eq:width} and have prior mean widths $2,2$, and
approximately $1.95$, respectively. Each numerical law is
conditioned on $\mathsf K\leq64$.

Let $Q_{n,64}^{\eta,\pi}$ be the posterior on $\mathcal T_{64}$ of
$(\mathsf K,\theta_{\mathsf K})$ under width law $\pi$,
conditional on $\mathsf K\leq64$.
For any integrable function $h$ of the pair,
\begin{equation}\label{eq:simulation-width-reweight}
\int_{\mathcal T_{64}} h\,\mathrm d Q_{n,64}^{\eta,\pi}
=\frac{\int_{\mathcal T_{64}} h\,w_\pi(\mathsf K)
\,\mathrm d Q_{n,64}^{\eta,\pi_G}}
{\int_{\mathcal T_{64}} w_\pi(\mathsf K)
\,\mathrm d Q_{n,64}^{\eta,\pi_G}},\qquad
w_\pi(k)=\frac{\pi(k)}{\pi_G(k)}.
\end{equation}
The conditional network-parameter law is unchanged and the truncation
normalizers cancel. We apply this identity to the conditional
expectation of the clipped output, using the same clipping and data
for all laws. Importance weights are normalized within each data set
before calculating its posterior mean and squared error.
For the $S$ posterior draws with widths $k_1,\ldots,k_S$, set
$w_s=w_\pi(k_s)$. The Kish effective sample size, expressed as a
fraction of $S$ \citep[pp.~190--191]{kish1992},
$(\sum_{s=1}^{S}w_s)^2/(S\sum_{s=1}^{S}w_s^2)$,
measures the concentration of the importance weights among the posterior
draws. It does not account for their Markov dependence or unsampled modes.

{Across the Gaussian fits for each target, minimum Kish effective sample size fractions for shifted Poisson reweighting are $0.403$ and $0.482$ for $\beta=1/2$ and $\beta=1$. For power-law width they are $0.306$ and $0.194$, respectively.}

Table~\ref{tab:width-robustness} reports normalized errors and standard errors across the 50 original data replications in each case. The Gaussian width comparisons reweight the same posterior samples. This comparison does not establish a uniform rate over all width laws satisfying \eqref{eq:width}.

\begin{table}[htbp]
\caption{Squared prediction error divided by $q_{n,\beta}=(\log n/n)^{2\beta/(2\beta+1)}$ under three width laws with Gaussian parameters. Each estimate uses 50 independent replications. Parentheses give Monte Carlo standard errors. Supplementary Tables~\ref{tab:supp-half} and~\ref{tab:supp-lipschitz} give pointwise 95\% normal Monte Carlo confidence intervals.}
\label{tab:width-robustness}
\centering\small
\begin{tabular}{@{}crrrr@{}}
\toprule
$\beta$ & $n$ & Geometric & Shifted Poisson & Power law\\
\midrule
$1/2$ & 64 & 0.333 (0.015) & 0.335 (0.015) & 0.331 (0.014)\\
 & 128 & 0.383 (0.018) & 0.383 (0.018) & 0.383 (0.018)\\
 & 256 & 0.430 (0.009) & 0.429 (0.009) & 0.434 (0.009)\\
 & 512 & 0.460 (0.016) & 0.463 (0.015) & 0.477 (0.016)\\
 & 1024 & 0.278 (0.025) & 0.293 (0.025) & 0.287 (0.027)\\
 & 2048 & 0.164 (0.006) & 0.172 (0.006) & 0.161 (0.006)\\
 & 4096 & 0.132 (0.005) & 0.145 (0.006) & 0.128 (0.005)\\
\midrule
$1$ & 64 & 0.759 (0.025) & 0.762 (0.025) & 0.757 (0.024)\\
 & 128 & 0.941 (0.024) & 0.944 (0.022) & 0.953 (0.023)\\
 & 256 & 1.173 (0.034) & 1.189 (0.029) & 1.222 (0.029)\\
 & 512 & 0.769 (0.086) & 0.812 (0.086) & 0.809 (0.091)\\
 & 1024 & 0.317 (0.020) & 0.330 (0.024) & 0.321 (0.025)\\
 & 2048 & 0.277 (0.014) & 0.290 (0.014) & 0.273 (0.014)\\
 & 4096 & 0.255 (0.012) & 0.272 (0.012) & 0.249 (0.011)\\
\bottomrule
\end{tabular}
\end{table}

\subsection{Sensitivity to the parameter law}
\label{app:simulation-weights}
Uniform and Student $t_{10}$ simulations use geometric width and the computational methods above. Figure~\ref{fig:adaptive-risk} and the table below report 50 original diagnostic-passing data replications per case.

\begin{table}[htbp]
\caption{Squared prediction error for uniform and Student $t_{10}$ parameters with geometric width. Each estimate uses 50 independent replications. Parentheses give Monte Carlo standard errors. Supplementary Tables~\ref{tab:supp-half} and~\ref{tab:supp-lipschitz} give pointwise 95\% normal Monte Carlo confidence intervals.}
\label{tab:direct-family-adaptive-risk}
\centering\small
\begin{tabular}{@{}crrrrr@{}}
\toprule
& & \multicolumn{2}{c}{Uniform parameters} & \multicolumn{2}{c}{Student $t_{10}$ parameters}\\
\cmidrule(lr){3-4}\cmidrule(l){5-6}
$\beta$ & $n$ & Error & Error$/q_{n,\beta}$ & Error & Error$/q_{n,\beta}$\\
\midrule
$1/2$ & 64 & 0.0860 (0.0038) & 0.337 (0.015) & 0.0847 (0.0037) & 0.332 (0.015)\\
 & 128 & 0.0750 (0.0035) & 0.385 (0.018) & 0.0745 (0.0034) & 0.383 (0.018)\\
 & 256 & 0.0639 (0.0014) & 0.434 (0.009) & 0.0638 (0.0013) & 0.433 (0.009)\\
 & 512 & 0.0548 (0.0013) & 0.497 (0.011) & 0.0564 (0.0006) & 0.511 (0.006)\\
 & 1024 & 0.0307 (0.0025) & 0.373 (0.030) & 0.0349 (0.0019) & 0.424 (0.024)\\
 & 2048 & 0.0114 (0.0006) & 0.188 (0.010) & 0.0121 (0.0005) & 0.199 (0.008)\\
 & 4096 & 0.0072 (0.0003) & 0.160 (0.007) & 0.0068 (0.0002) & 0.151 (0.005)\\
\midrule
$1$ & 64 & 0.1236 (0.0040) & 0.765 (0.025) & 0.1228 (0.0036) & 0.760 (0.023)\\
 & 128 & 0.1077 (0.0026) & 0.955 (0.023) & 0.1086 (0.0023) & 0.963 (0.021)\\
 & 256 & 0.0954 (0.0022) & 1.228 (0.028) & 0.0999 (0.0014) & 1.285 (0.018)\\
 & 512 & 0.0542 (0.0051) & 1.023 (0.096) & 0.0808 (0.0023) & 1.526 (0.044)\\
 & 1024 & 0.0148 (0.0020) & 0.415 (0.057) & 0.0202 (0.0017) & 0.564 (0.047)\\
 & 2048 & 0.0073 (0.0004) & 0.305 (0.015) & 0.0081 (0.0004) & 0.337 (0.016)\\
 & 4096 & 0.0047 (0.0002) & 0.292 (0.013) & 0.0042 (0.0002) & 0.264 (0.012)\\
\bottomrule
\end{tabular}
\end{table}

\clearpage
\subsection{Supplementary numerical results}
\label{app:simulation-supplementary}
Supplementary Tables~\ref{tab:supp-half} and~\ref{tab:supp-lipschitz}
report the unnormalized mean squared errors, Monte Carlo standard errors
and confidence intervals for all prior comparisons at every sample
size. They use the same 50 replications and interval formula as
Figure~\ref{fig:adaptive-risk}.

\begin{table}[H]
\caption{Supplementary squared-error estimates for the H\"older $1/2$ target. Each mean uses 50 replications; SE is the Monte Carlo standard error and CI is the pointwise 95\% normal Monte Carlo confidence interval. N/U/$t_{10}$ denote Gaussian/uniform/Student parameters; G/P/H denote geometric/shifted Poisson/power-law width.}
\label{tab:supp-half}
\centering\small
\begin{tabular}{@{}rlrrrr@{}}
\toprule
$n$ & Prior & Mean error & SE & \multicolumn{2}{c}{95\% CI}\\
\cmidrule(l){5-6}
 & & & & Lower & Upper\\
\midrule
64 & N/G & 0.085007 & 0.003779 & 0.077601 & 0.092414\\
 & N/P & 0.085522 & 0.003859 & 0.077959 & 0.093085\\
 & N/H & 0.084430 & 0.003688 & 0.077202 & 0.091658\\
 & U/G & 0.085986 & 0.003842 & 0.078457 & 0.093516\\
 & $t_{10}$/G & 0.084667 & 0.003708 & 0.077399 & 0.091936\\
\addlinespace[3pt]
128 & N/G & 0.074534 & 0.003511 & 0.067653 & 0.081415\\
 & N/P & 0.074636 & 0.003561 & 0.067657 & 0.081615\\
 & N/H & 0.074575 & 0.003446 & 0.067820 & 0.081330\\
 & U/G & 0.075039 & 0.003455 & 0.068267 & 0.081812\\
 & $t_{10}$/G & 0.074530 & 0.003424 & 0.067818 & 0.081241\\
\addlinespace[3pt]
256 & N/G & 0.063275 & 0.001353 & 0.060623 & 0.065926\\
 & N/P & 0.063092 & 0.001354 & 0.060439 & 0.065746\\
 & N/H & 0.063818 & 0.001343 & 0.061187 & 0.066450\\
 & U/G & 0.063906 & 0.001390 & 0.061181 & 0.066631\\
 & $t_{10}$/G & 0.063767 & 0.001337 & 0.061146 & 0.066388\\
\addlinespace[3pt]
512 & N/G & 0.050776 & 0.001794 & 0.047259 & 0.054293\\
 & N/P & 0.051131 & 0.001679 & 0.047841 & 0.054421\\
 & N/H & 0.052641 & 0.001764 & 0.049185 & 0.056098\\
 & U/G & 0.054809 & 0.001265 & 0.052329 & 0.057289\\
 & $t_{10}$/G & 0.056407 & 0.000638 & 0.055156 & 0.057658\\
\addlinespace[3pt]
1024 & N/G & 0.022838 & 0.002027 & 0.018865 & 0.026811\\
 & N/P & 0.024076 & 0.002065 & 0.020029 & 0.028123\\
 & N/H & 0.023650 & 0.002197 & 0.019344 & 0.027955\\
 & U/G & 0.030668 & 0.002498 & 0.025772 & 0.035564\\
 & $t_{10}$/G & 0.034906 & 0.001949 & 0.031085 & 0.038727\\
\addlinespace[3pt]
2048 & N/G & 0.010029 & 0.000367 & 0.009309 & 0.010749\\
 & N/P & 0.010487 & 0.000387 & 0.009728 & 0.011246\\
 & N/H & 0.009851 & 0.000366 & 0.009133 & 0.010569\\
 & U/G & 0.011447 & 0.000598 & 0.010275 & 0.012620\\
 & $t_{10}$/G & 0.012139 & 0.000518 & 0.011125 & 0.013154\\
\addlinespace[3pt]
4096 & N/G & 0.005968 & 0.000239 & 0.005499 & 0.006436\\
 & N/P & 0.006533 & 0.000262 & 0.006020 & 0.007046\\
 & N/H & 0.005782 & 0.000230 & 0.005331 & 0.006233\\
 & U/G & 0.007191 & 0.000333 & 0.006539 & 0.007843\\
 & $t_{10}$/G & 0.006812 & 0.000227 & 0.006367 & 0.007256\\
\bottomrule
\end{tabular}
\end{table}

\begin{table}[H]
\caption{Supplementary squared-error estimates for the Lipschitz target. Each mean uses 50 replications; SE is the Monte Carlo standard error and CI is the pointwise 95\% normal Monte Carlo confidence interval. N/U/$t_{10}$ denote Gaussian/uniform/Student parameters; G/P/H denote geometric/shifted Poisson/power-law width.}
\label{tab:supp-lipschitz}
\centering\small
\begin{tabular}{@{}rlrrrr@{}}
\toprule
$n$ & Prior & Mean error & SE & \multicolumn{2}{c}{95\% CI}\\
\cmidrule(l){5-6}
 & & & & Lower & Upper\\
\midrule
64 & N/G & 0.122619 & 0.004002 & 0.114774 & 0.130463\\
 & N/P & 0.123184 & 0.004071 & 0.115206 & 0.131162\\
 & N/H & 0.122391 & 0.003926 & 0.114696 & 0.130085\\
 & U/G & 0.123569 & 0.003989 & 0.115751 & 0.131387\\
 & $t_{10}$/G & 0.122785 & 0.003641 & 0.115649 & 0.129921\\
\addlinespace[3pt]
128 & N/G & 0.106235 & 0.002660 & 0.101022 & 0.111449\\
 & N/P & 0.106525 & 0.002508 & 0.101610 & 0.111440\\
 & N/H & 0.107517 & 0.002581 & 0.102460 & 0.112575\\
 & U/G & 0.107741 & 0.002634 & 0.102579 & 0.112903\\
 & $t_{10}$/G & 0.108627 & 0.002319 & 0.104082 & 0.113172\\
\addlinespace[3pt]
256 & N/G & 0.091114 & 0.002667 & 0.085887 & 0.096341\\
 & N/P & 0.092422 & 0.002279 & 0.087955 & 0.096888\\
 & N/H & 0.094926 & 0.002222 & 0.090572 & 0.099281\\
 & U/G & 0.095434 & 0.002179 & 0.091163 & 0.099706\\
 & $t_{10}$/G & 0.099880 & 0.001431 & 0.097076 & 0.102684\\
\addlinespace[3pt]
512 & N/G & 0.040727 & 0.004566 & 0.031777 & 0.049677\\
 & N/P & 0.042978 & 0.004576 & 0.034008 & 0.051947\\
 & N/H & 0.042831 & 0.004841 & 0.033343 & 0.052320\\
 & U/G & 0.054153 & 0.005064 & 0.044228 & 0.064078\\
 & $t_{10}$/G & 0.080794 & 0.002322 & 0.076244 & 0.085344\\
\addlinespace[3pt]
1024 & N/G & 0.011347 & 0.000729 & 0.009917 & 0.012776\\
 & N/P & 0.011812 & 0.000862 & 0.010122 & 0.013501\\
 & N/H & 0.011479 & 0.000902 & 0.009711 & 0.013248\\
 & U/G & 0.014834 & 0.002039 & 0.010837 & 0.018830\\
 & $t_{10}$/G & 0.020169 & 0.001685 & 0.016867 & 0.023471\\
\addlinespace[3pt]
2048 & N/G & 0.006665 & 0.000336 & 0.006007 & 0.007323\\
 & N/P & 0.006968 & 0.000333 & 0.006316 & 0.007621\\
 & N/H & 0.006564 & 0.000336 & 0.005905 & 0.007222\\
 & U/G & 0.007315 & 0.000364 & 0.006601 & 0.008029\\
 & $t_{10}$/G & 0.008088 & 0.000381 & 0.007341 & 0.008835\\
\addlinespace[3pt]
4096 & N/G & 0.004082 & 0.000185 & 0.003720 & 0.004445\\
 & N/P & 0.004365 & 0.000188 & 0.003996 & 0.004734\\
 & N/H & 0.003999 & 0.000184 & 0.003639 & 0.004360\\
 & U/G & 0.004688 & 0.000202 & 0.004291 & 0.005085\\
 & $t_{10}$/G & 0.004230 & 0.000192 & 0.003853 & 0.004607\\
\bottomrule
\end{tabular}
\end{table}

\clearpage

\section{Parameter-law probability calculations}
\label{app:weight-boxes}

For an admissible centre $\theta^*$ in
\eqref{eq:weight-centres} and $0<\delta\leq1/2$, write
\[
\mathcal B_k(\theta^*,\delta)=
\{\theta\in\mathbb R^{(d+2)k}:
\norm{\theta-\theta^*}_\infty\leq\delta\}.
\]
We prove the bound \eqref{eq:weight-mass} for every such centre,
radius, and width, with constants independent of them.

\subsection{Uniform parameters}

The output intervals lie inside $[-k^\omega,k^\omega]$ because
$|a_m^*|+\delta\leq k^\omega/2+1/2\leq k^\omega$.
The hidden-layer coefficient and bias centres are bounded by one,
so their intervals lie inside $[-2,2]$. Independence gives
\begin{equation}\label{eq:uniform-exact}
\Pi_k^\theta\{\mathcal B_k(\theta^*,\delta)\}
=\left(\frac{\delta}{k^\omega}\right)^k
\left(\frac{\delta}{2}\right)^{(d+1)k}.
\end{equation}
Consequently,
\begin{equation}\label{eq:uniformcost}
-\log\Pi_k^\theta\{\mathcal B_k(\theta^*,\delta)\}
=k\omega\log k+(d+2)k\log(1/\delta)+(d+1)k\log2
\lesssim k\log(ek/\delta).
\end{equation}
This proves the uniform case of Lemma~\ref{lem:families}.
At a fixed width the parameter support is compact, and its continuous
image in $\Ccal(\Xcal)$ is compact. The full mixture allows
arbitrarily large widths and does not have compact function support.

\subsection{Gaussian parameters}

For $Z\sim\mathrm N(0,\tau^2)$, $\tau\geq1$, and $z\in\mathbb R$, integrating the
density on $[z-\delta,z+\delta]$ gives
\begin{equation}\label{eq:gauss-local}
\PP\{|Z-z|\leq\delta\}
\geq\frac{2\delta}{\sqrt{2\pi}\tau}
\exp\left\{-\frac{(|z|+\delta)^2}{2\tau^2}\right\}.
\end{equation}
Since $(|z|+\delta)^2\leq2z^2+2\delta^2$, the negative logarithm
is at most $C+\log(\tau/\delta)+z^2/\tau^2$.
The bounded hidden-layer coefficient and bias centres add only
$O(k)$ beyond the precision terms. With output standard deviation
$\tau_k=k^{3/(2d)}$, independence therefore yields
\begin{equation}\label{eq:gaussian-box-full}
\begin{split}
-\log\Pi_k^\theta\{\mathcal B_k(\theta^*,\delta)\}
\leq{}&Ck+(d+2)k\log(1/\delta)\\
&+k\log\tau_k+
\tau_k^{-2}\sum_{m=1}^k|a_m^*|^2.
\end{split}
\end{equation}
Admissibility controls the final term:
\begin{equation}\label{eq:gaussiancost}
\tau_k^{-2}\sum_{m=1}^k|a_m^*|^2
\leq k^{-3/d}\left(\sum_{m=1}^k|a_m^*|\right)^2
\leq\tfrac14 k^{-3/d}k^{2\omega}=\tfrac14 k.
\end{equation}
Also $k\log\tau_k=(3/(2d))k\log k$.
Thus \eqref{eq:gaussian-box-full} is
$O(k\log(ek/\delta))$, proving the Gaussian case of
Lemma~\ref{lem:families}.
For unit-variance Gaussian output parameters, an admissible centre
with $a_1^*=k^\omega/2$ and all other coordinates zero has a
quadratic log-probability cost of order $k^{2\omega}$.
Since $2\omega=1+3/d>1$, it fails
\eqref{eq:weight-mass} at any fixed positive radius. This failure
of the sufficient condition does not determine that prior's
posterior rate.

\subsection{Student \texorpdfstring{$t$}{t} and Cauchy parameters}

For every fixed $\nu>0$, the density of $t_\nu$ is
\[
q_\nu(z)=\frac{\Gamma((\nu+1)/2)}
{\sqrt{\nu\pi}\,\Gamma(\nu/2)}
\left(1+\frac{z^2}{\nu}\right)^{-(\nu+1)/2},
\qquad z\in\mathbb R.
\]
It satisfies $q_\nu(z)\geq c_\nu(1+|z|)^{-(\nu+1)}$.
If $Z$ has a probability density $q$ on $\mathbb R$ satisfying
$q(z)\geq c(1+|z|)^{-r}$ for all $z\in\mathbb R$, then
integration over $[z-\delta,z+\delta]$ gives
\begin{equation}\label{eq:t-local}
\PP\{|Z-z|\leq\delta\}
\geq2c\delta\,2^{-r}(1+|z|)^{-r}.
\end{equation}
For Student parameters, use $r=\nu+1$.
All hidden-layer and bias centres are bounded by one, and
admissibility gives $|a_m^*|\leq k^\omega/2$. Summing the
negative logarithms of the independent interval probabilities gives
\begin{equation}\label{eq:t-box-full}
\begin{split}
-\log\Pi_k^\theta\{\mathcal B_k(\theta^*,\delta)\}
\leq{}&Ck+(d+2)k\log(1/\delta)\\
&+(\nu+1)k\log(1+k^\omega/2)
\lesssim k\log(ek/\delta).
\end{split}
\end{equation}
This proves the Student case and completes the proof of
Lemma~\ref{lem:families}. The Cauchy law is $t_1$, and no moment
of the parameter law is used.

\subsection{Polynomial density lower bounds}
\label{app:polynomial-density}

\begin{proof}[Proof of Proposition~\ref{cor:polytail}]
Apply \eqref{eq:t-local} to each scalar parameter. As in the
Student calculation, all hidden-layer and bias centres are bounded
by one and $|a_m^*|\leq k^\omega/2$.
Independence bounds the negative log probability of
$\mathcal B_k(\theta^*,\delta)$ by
\[
Ck+(d+2)k\log(1/\delta)+rk\log(1+k^\omega/2)
\lesssim k\log(ek/\delta).
\]
The constant depends only on $d,c,r$. This proves
\eqref{eq:weight-mass}; paired with \eqref{eq:width}, the prior is
therefore covered by Proposition~\ref{prop:smallball} and the
contraction and risk theorems. No moment assumption is needed.
\end{proof}

\section{Fractional-posterior prior-probability argument}
\label{app:fractional-proof}

\begingroup
\begin{proof}[Proof of Lemma~\ref{lem:frac}]
We first give a lower bound for the posterior denominator and then
an upper bound for the expected numerator outside the stated
R\'enyi neighbourhood.
Let $\mathcal B_n=\{f\in\Ccal(\Xcal):
\norm{f-f_0}_\infty\leq\varepsilon_n\}$ and
$\Pi_{\mathcal B_n}=\Pi(\,\cdot\mid\mathcal B_n)$.
The likelihood ratio under $f$ relative to $f_0$ is
\[
\log R_n(f)=-\frac12\sum_{i=1}^n
\{(Y_i-f(X_i))^2-(Y_i-f_0(X_i))^2\}.
\]
The powered denominator is
$Z_n=\int_{\Ccal(\Xcal)}R_n(f)^\eta\Pi(\mathrm d f)$.
Restricting this integral to $\mathcal B_n$ and applying Jensen's
inequality gives
\begin{equation}\label{eq:den-main}
Z_n\geq\Pi(\mathcal B_n)e^{\eta\Lambda_n},
\qquad
\Lambda_n=\int_{\mathcal B_n}\log R_n(f)\,
\Pi_{\mathcal B_n}(\mathrm d f).
\end{equation}

Write $h_i(f)=f(X_i)-f_0(X_i)$.
Since $Y_i=f_0(X_i)+\xi_i$, expansion of the squares gives
\[
\Lambda_n
=\sum_{i=1}^n\xi_i\int_{\mathcal B_n}h_i(f)\,
\Pi_{\mathcal B_n}(\mathrm d f)
-\frac12\sum_{i=1}^n\int_{\mathcal B_n}h_i(f)^2\,
\Pi_{\mathcal B_n}(\mathrm d f).
\]
Conditional on the covariates, the integrals are deterministic
and the noises are independent standard normals. Thus
$\Lambda_n$ is Gaussian, with conditional mean and variance
\begin{align*}
\mu_n&=-\frac12\sum_{i=1}^n\int_{\mathcal B_n}h_i(f)^2\,
\Pi_{\mathcal B_n}(\mathrm d f)
\geq-\frac12n\varepsilon_n^2,\\
\sigma_n^2&=\sum_{i=1}^n
\left\{\int_{\mathcal B_n}h_i(f)\,
\Pi_{\mathcal B_n}(\mathrm d f)\right\}^2
\leq n\varepsilon_n^2.
\end{align*}
The bounds use $|h_i(f)|\leq\varepsilon_n$ on $\mathcal B_n$.
A Gaussian tail bound gives
$\PP_{f_0}\{\Lambda_n<-n\varepsilon_n^2\}
\leq e^{-n\varepsilon_n^2/8}$; the zero-variance case satisfies
the same bound. The assumed prior probability and
\eqref{eq:den-main} imply
\begin{equation}\label{eq:den-tail-main}
\PP_{f_0}\{Z_n<e^{-(c_0+\eta)n\varepsilon_n^2}\}
\leq e^{-n\varepsilon_n^2/8}.
\end{equation}

Set $\mathcal A_n=\{f\in\Ccal(\Xcal):
D_\eta(f,f_0)>J\varepsilon_n^2\}$.
Integrating the Gaussian noises conditional on the covariates gives
\[
\E_{f_0}\{R_n(f)^\eta\mid X_1,\ldots,X_n\}
=\exp\left\{-\frac{\eta(1-\eta)}2
\sum_{i=1}^n(f(X_i)-f_0(X_i))^2\right\}.
\]
Integrating the independent covariates and using \eqref{eq:renyi}
therefore yields
\[
\E_{f_0}R_n(f)^\eta
=e^{-(1-\eta)nD_\eta(f,f_0)}.
\]
By Tonelli's theorem,
\[
\E_{f_0}\int_{\mathcal A_n}R_n(f)^\eta\Pi(\mathrm d f)
\leq e^{-(1-\eta)Jn\varepsilon_n^2}.
\]

On the event $Z_n\geq e^{-(c_0+\eta)n\varepsilon_n^2}$,
the posterior probability of $\mathcal A_n$ is at most its numerator
multiplied by $e^{(c_0+\eta)n\varepsilon_n^2}$.
On the complementary event it is at most one.
Taking expectations and applying \eqref{eq:den-tail-main} gives
\[
\E_{f_0}\Pi_n^\eta(\mathcal A_n\mid\Dcal_n)
\leq e^{-n\varepsilon_n^2/8}
+e^{-\{(1-\eta)J-c_0-\eta\}n\varepsilon_n^2}.
\]
All estimates are uniform over the assumed class of targets,
proving \eqref{eq:frac-general}.
\end{proof}
\endgroup

\end{document}